\newtheorem{thm}{Theorem}[section]
\newtheorem*{claim}{Claim}
\newtheorem{lem}[thm]{Lemma}
\theoremstyle{definition}
\newtheorem*{defin}{Definition}
\def\rr{\mathbb{R}}
\def\UU{\mathcal{U}}
\def\VV{\mathcal{V}}
\def\rmax{R_{\mathrm{max}}}
\def\rsch{R_{\mathrm{sc}}}
\def\rharm{R_{\mathrm{h}}}
\def\gflat{g_{\mathrm{flat}}}
\def\Gflat{G_{\mathrm{flat}}}
\def\isom{\cong}
\DeclareMathOperator{\spt}{spt}
\begin{document}

\title{On the Riemannian Penrose inequality in dimensions less than 8}
\author{Hubert L. Bray\thanks{The first author was partially supported by NSF grant DMS-0533551.}\\
Duke University\\
bray@math.duke.edu
 \and
Dan A. Lee\\ Duke University\\ dalee@math.duke.edu 
}
\date{\today}
\maketitle

\begin{abstract}
The Positive Mass Theorem states that a complete asymptotically flat manifold of nonnegative scalar 
curvature has nonnegative mass.  The Riemannian Penrose inequality provides a sharp lower bound for the 
mass 
when black holes are present.  More precisely, this lower bound is given in terms of the area of an 
outermost minimal surface, and equality is achieved only for Schwarzschild metrics.  The Riemannian 
Penrose inequality was first proved in three dimensions in 
1997 by G. Huisken and T. Ilmanen for the case of a single black hole \cite{huiilm}.  In 1999, H. Bray extended this result to the general case of multiple black holes using a different technique \cite{bray}.  In this paper we extend the technique of \cite{bray} to dimensions less than 8.
\end{abstract}

\section{Introduction}

The Penrose Conjecture is a longstanding conjecture in general relativity that provides a lower bound for the mass of an asymptotically flat spacelike slice of spacetime, in terms of the area of the black holes in the spacelike slice.  Penrose originally formulated the conjecture as a test for the far more ambitious idea of cosmic censorship.  In the case where the asymptotically flat spacelike slice is time-symmetric, the Penrose Conjecture reduces to a statement in Riemannian geometry, which we call the Riemannian Penrose inequality.  In this paper we will restrict our attention to the Riemannian Penrose inequality.  For more background on the general Penrose Conjecture, as well as some physical motivation, see \cite[Section 1]{bray} and references cited therein.

The Riemannian Penrose inequality was first proved in three dimensions in 1997 by G.~Huisken and T. Ilmanen for the case of a single black hole \cite{huiilm}.  In 1999, H. Bray extended this result to the general case of multiple black holes using a different technique \cite{bray}.  Before we state this theorem, let us review some definitions.

\begin{defin}
Let $n\geq 3$.  A Riemannian manifold $(M^n,g)$ is said to be \emph{asymptotically flat}\footnote{Note that there are various inequivalent definitions of asymptotic flatness in the literature, but they are all similar in spirit.  This one is taken from \cite[Section 4]{montecatini}.} if there is a compact set $K\subset M$ such that $M\smallsetminus K$ is a disjoint union of \emph{ends}, $E_k$, such that each end is diffeomorphic to $\rr^n\smallsetminus B_1(0)$, and in each of these coordinate charts, the metric $g_{ij}$ satisfies
\begin{eqnarray*}
g_{ij}&=&\delta_{ij}+O(|x|^{-p})\\
g_{ij,k}&=&O(|x|^{-p-1})\\
g_{ij,kl}&=&O(|x|^{-p-2})\\
R_g&=&O(|x|^{-q})
\end{eqnarray*}
for some $p>(n-2)/2$ and some $q>n$, where the commas denote partial derivatives in the coordinate chart, and $R_g$ is the scalar curvature of $g$.

In this case, in each end $E_k$, the limit
$$m(E_k,g)={1\over 2(n-1)\omega_{n-1}}\lim_{\sigma\to\infty}\int_{S_\sigma} (g_{ij,i}-g_{ii,j})\nu_jd\mu$$
exists (see, e.g.\ \cite[Section 4]{montecatini}), where  $\omega_{n-1}$ is the area of the standard unit $(n-1)$-sphere, $S_{\sigma}$ is the coordinate sphere in $E_k$ of radius $\sigma$, $\nu$ is its outward unit normal, and  $d\mu$ is the Euclidean area element on $S_{\sigma}$.  We call the quantity $m(E_k,g)$, first considered by Arnowitt, Deser, and Misner (see, e.g.\ \cite{ADM}), the \emph{ADM mass} of the end $(E_k,g)$, or when the context is clear, we simply call it the \emph{mass}, $m(g)$.  (Under an additional assumption on the Ricci curvature, R.\ Bartnik showed that the ADM mass is a Riemannian invariant, independent of choice of asymptotically flat coordinates \cite{bartnik}.)
\end{defin}

The Riemannian Penrose inequality may be thought of as a refinement of the celebrated Positive Mass Theorem when black holes are present.  Indeed, we will need to use the Positive Mass Theorem for our proof.
\begin{thm}[(Riemannian) Positive Mass Theorem]
Let $(M^n,g)$ be a complete asymptotically flat manifold with nonnegative scalar curvature.  If $n<8$ or if $M$ is spin, then the mass of each end is nonnegative.  Moreover, if any of the ends has zero mass, then $(M^n,g)$ is isometric to Euclidean space.
\end{thm}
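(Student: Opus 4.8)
The plan is to handle the two cases by the two classical routes to the Positive Mass Theorem; note that if $M$ is not spin then $n<8$ by hypothesis, so the two cases below together cover the statement. For the spin case I would follow Witten's argument. Fix an end $E_k$ and a unit constant spinor $\psi_0$ in the asymptotically flat coordinates on $E_k$, and solve the Dirac equation $D\psi=0$ on $M$ with $\psi\to\psi_0$ at infinity in $E_k$ and $\psi\to 0$ in every other end; the decay hypotheses $p>(n-2)/2$ and $q>n$ are what is needed to make $D$ Fredholm between the appropriate weighted Sobolev spaces and to produce such a $\psi$ with $\psi-\psi_0$ decaying in $E_k$. Apply the Lichnerowicz formula $D^2=\nabla^*\nabla+\tfrac14 R_g$ to $\psi$; since $D\psi=0$, integrating the resulting identity over the compact region $\Omega_\sigma$ bounded by large coordinate spheres in all the ends and integrating by parts expresses $\int_{\Omega_\sigma}\bigl(|\nabla\psi|^2+\tfrac14 R_g|\psi|^2\bigr)$ as a boundary integral over the union of those spheres. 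The contributions of the ends other than $E_k$ tend to $0$ as $\sigma\to\infty$ because $\psi\to 0$ there, while the contribution of $E_k$ converges---after a computation using the asymptotic expansion of $g$ and the relation between the spin and Levi-Civita connections---to a positive multiple of $m(E_k,g)$. Since $R_g\geq 0$, the left side is nonnegative in the limit, so $m(E_k,g)\geq 0$. For rigidity, $m(E_k,g)=0$ forces $\nabla\psi\equiv 0$ and $R_g\equiv 0$; carrying out the construction for a basis of constant spinors at infinity yields a maximal-dimensional space of parallel spinors, hence the curvature tensor vanishes, and since a complete flat manifold is $\rr^n/\Gamma$ for a discrete group $\Gamma$ of isometries while an end diffeomorphic to $\rr^n\smallsetminus B_1(0)$ forces $\Gamma$ trivial, $(M^n,g)$ is isometric to $\rr^n$.

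For the non-spin case with $n<8$ I would use the Schoen--Yau minimal-hypersurface method, arguing by contradiction with an induction on $n$. Suppose $m(E_k,g)<0$ for some end. By standard conformal and density reductions one may assume that $g$ is harmonically flat near infinity, has $R_g\geq 0$, and still has $m(E_k,g)<0$. Using the negativity of the mass as a barrier near infinity, solve a Plateau-type problem to produce a complete, two-sided, area-minimizing hypersurface $\Sigma^{n-1}\subset M$ asymptotic to a coordinate hyperplane in $E_k$. This is the step that forces $n<8$: interior regularity for codimension-one area minimizers holds only in ambient dimension at most $7$, so here $\Sigma$ is a smooth embedded submanifold. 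Stability of $\Sigma$, the Gauss equation, and a conformal change of the induced metric that exploits the stability inequality together endow $\Sigma$ with an asymptotically flat metric of nonnegative scalar curvature. The inductive form of the Positive Mass Theorem in dimension $n-1$---with the base case $n-1=2$ handled by a Gauss--Bonnet-type total-curvature obstruction---then forces $\Sigma$, and with it $(M^n,g)$ itself, to be flat, contradicting $m(E_k,g)<0$. Rigidity follows from the equality cases of these inequalities, or from a deformation argument: if the mass of an end vanishes but $g$ is not flat, one perturbs to a nearby metric of nonnegative scalar curvature with strictly negative mass, contradicting the inequality just proved.

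The main obstacle differs in the two cases. In the spin case it is the boundary-term computation: verifying that the limit of the integrated Lichnerowicz identity is precisely the ADM mass requires careful bookkeeping of the asymptotics of $g$ and of the induced spin connection. In the non-spin case it is the construction and control of $\Sigma$---producing an asymptotically flat minimal hypersurface with enough decay of its extrinsic geometry to drive the induction---and it is exactly the regularity theory for area-minimizing hypersurfaces that is responsible for the restriction $n<8$.
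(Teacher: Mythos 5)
The paper gives no proof of this statement: it is invoked as a known theorem, with the $n<8$ case attributed to Schoen--Yau's minimal-hypersurface method and the spin case to Witten's spinor argument, and your two-pronged outline is essentially a summary of exactly those cited proofs, so your approach coincides with the one the paper relies on. One small inaccuracy worth fixing: in the Schoen--Yau induction the contradiction is reached by applying the dimension-$(n-1)$ theorem to the conformally deformed metric on $\Sigma$ (whose mass would have to be negative, or, for $n=3$, by the Gauss--Bonnet/stability estimate directly), not by concluding that $\Sigma$ and $(M^n,g)$ are flat --- that style of argument belongs to the rigidity case rather than to the inequality itself.
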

The $n<8$ case was proved by R. Schoen and S.-T. Yau using minimal surface techniques~\cite{schyau} (see also \cite[Section 4]{montecatini}), and soon later E. Witten proved the spin case using a Bochner-type argument \cite{witten} (see also \cite{bartnik}).

Now fix a particular end of $M^n$.  Define $\mathcal{S}$ to be the collection of hypersurfaces that are smooth compact boundaries of open sets in $M$ containing all of the other ends.  Then each hypersurface in $\mathcal{S}$ defines a meaningful \emph{outside} and \emph{inside}.
\begin{defin}
A \emph{horizon} in $(M^n, g)$ is a minimal hypersurface in $\mathcal{S}$.  A horizon $\Sigma$ is \emph{outer minimizing} if its area minimizes area among all hypersurfaces in $\mathcal{S}$ enclosing $\Sigma$.
\end{defin}

\begin{defin}
The \emph{(Riemannian) Schwarzschild manifold} of dimension $n$ and mass $m$ is $\rr^n\smallsetminus\{0\}$ equipped with the metric $$g_{ij}(x)=\left(1+{m\over2}|x|^{2-n}\right)^{4\over n-2}\delta_{ij}.$$
Given a mass $m$, we also define the \emph{Schwarzschild radius} of the mass $m$ to be
$$\rsch(m)= \left({m\over 2}\right)^{1\over n-2}.$$
\end{defin}
Note that in a Schwarzschild manifold, the coordinate sphere of radius $\rsch(m)$ is the unique outer minimizing horizon, and its area $A$ satisfies the equation 
$$m= {1\over2} \left({A\over \omega_{n-1}}\right)^{n-2\over n-1}.$$

We can now state the main result of \cite{bray}.
\begin{thm}[Riemannian Penrose inequality in three dimensions]
Let $(M^3,g)$ be a complete asymptotically flat 3-manifold with nonnegative scalar curvature.  Fix one end.  Let $m$ be the mass of that end, and let $A$ be the area of an outer minimizing horizon (with one or more components).  Then 
$$m\geq\sqrt{{A\over 16\pi}},$$
with equality if and only if the part of $(M,g)$ outside the horizon is isometric to a Riemannian Schwarzschild manifold outside its unique outer minimizing horizon.
\end{thm}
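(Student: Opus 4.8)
\section*{Proof proposal}

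The plan is to follow Bray's strategy: flow the metric conformally toward a Riemannian Schwarzschild metric in a way that never increases the ADM mass and never changes the horizon area, and then read off the inequality in the limit. After an approximation argument (in the spirit of the Schoen--Yau density argument used in the Positive Mass Theorem) that reduces us to the case where $g=g_0$ is harmonically flat near infinity, define a one-parameter family of metrics $g_t=u_t^4\,g_0$, $t\ge 0$, with $u_0\equiv 1$, as follows. For each $t$, let $\Sigma_t$ be the outermost minimal-area enclosure of the given horizon $\Sigma_0$ in $(M,g_t)$ (in dimension $3$ such minimizing surfaces are automatically smooth, which keeps the geometric measure theory elementary), and let $v_t$ be the $g_0$-harmonic function on the region outside $\Sigma_t$ that vanishes on $\Sigma_t$ and throughout its interior and tends to $-e^{-t}$ at infinity; then set $\partial_t u_t=v_t$. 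By the maximum principle $-e^{-t}\le v_t\le 0$, so $u_t$ is nonincreasing in $t$, equals $1$ on and inside $\Sigma_0$, and tends to $e^{-t}$ at infinity. Moreover $u_t$ is positive and $g_0$-superharmonic (it is $g_0$-harmonic off the region swept out by the $\Sigma_s$, and the jumps of $\Sigma_t$ contribute a nonpositive distributional term), so from $R_{g_0}\ge 0$ and the conformal change formula $R_{g_t}=u_t^{-5}\bigl(-8\Delta_{g_0}u_t+R_{g_0}u_t\bigr)$ we get $R_{g_t}\ge 0$ for every $t$. A first technical hurdle is simply that this implicitly defined flow exists and is regular enough to work with: $\Sigma_t$, $v_t$, and $u_t$ are mutually coupled, so one needs a careful fixed-point/limiting construction.

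The core of the argument consists of two monotonicity statements. First, $A(t):=|\Sigma_t|_{g_t}$ is independent of $t$. The surfaces $\Sigma_t$ only move outward as $t$ increases, but each jump occurs across a minimal ``neck'' and so contributes no area, while between jumps the first variation of area under the conformal change, combined with the minimality and outer-minimizing property of $\Sigma_t$ in $(M,g_t)$, shows $A(t)$ is locally constant. Second, $m(t):=m(g_t)$ is nonincreasing. This is where the Positive Mass Theorem is used: from the flow data at time $t$ one constructs an auxiliary complete asymptotically flat $3$-manifold of nonnegative scalar curvature --- built from the harmonic function $v_t$ together with a conformal change and a doubling across the minimal surface $\Sigma_t$ --- whose ADM mass is a positive constant times $-\,m'(t)$ (equivalently, a positive multiple of $m(t_1)-m(t_2)$ for $t_1<t_2$). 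Nonnegativity of that mass then gives $m'(t)\le 0$. Carrying this out --- in particular, identifying the ADM mass of the auxiliary manifold with the correct expression in the $|x|^{-1}$-coefficients of $u_t$, and using $v_t=0$ on the minimal surface $\Sigma_t$ to pin down the sign --- is the main obstacle of the whole proof.

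With these two facts in hand one analyzes the flow as $t\to\infty$. The surfaces $\Sigma_t$ expand and the flow spreads the curvature toward infinity, ``rounding out'' the horizon; one shows that $(M\smallsetminus\Sigma_t,\,g_t)$ converges to the region outside the unique outer-minimizing horizon of a Riemannian Schwarzschild manifold of some mass $m_\infty$. Since $A(t)\equiv A$ and $m(t)\to m_\infty$, the Schwarzschild area--mass relation noted before the theorem forces $m_\infty=\sqrt{A/16\pi}$, and the monotonicity of mass then yields
$$m(g)=m(0)\ \ge\ m_\infty=\sqrt{\frac{A}{16\pi}}.$$
Establishing this convergence with enough uniformity (e.g.\ $C^0$ on compact exterior regions, together with control at infinity) to justify both the area--mass identification and the later rigidity analysis is the second substantial technical step; combined with the initial approximation it also disposes of the general, not necessarily harmonically flat, asymptotically flat case.

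Finally, the rigidity statement. The ``if'' direction is immediate from the area--mass relation for Schwarzschild metrics. For the ``only if'' direction, suppose $m(g)=\sqrt{A/16\pi}$; then $m(t)\equiv m_\infty$, so $m'(t)\equiv 0$ and the ADM mass of every auxiliary Positive Mass Theorem manifold constructed above vanishes. The rigidity clause of the Positive Mass Theorem forces each of those manifolds to be flat, which in turn pins down $g_t$ outside $\Sigma_t$ at every time; propagating this rigidity back to $t=0$ shows that the part of $(M,g)$ outside $\Sigma_0$ is isometric to a Riemannian Schwarzschild manifold outside its unique outer-minimizing horizon, as claimed.
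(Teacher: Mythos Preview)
Your proposal is correct and follows exactly the conformal flow strategy that the paper describes (and attributes to \cite{bray}): reduction to harmonic flatness, the flow $g_t=u_t^{4}g_0$ with $\partial_t u_t=v_t$ and $v_t\to -e^{-t}$ at infinity, constancy of $|\Sigma_t|_{g_t}$, monotonicity of $m(t)$ via the reflected auxiliary manifold together with the Positive Mass Theorem (the paper's identity $m'(t)=-2\tilde m(t)$), convergence to Schwarzschild, and rigidity from the equality case of the Positive Mass Theorem. The only imprecision is that in the limit one obtains $m_\infty\ge\sqrt{A/16\pi}$ (via the outer-minimizing property and comparison with coordinate spheres just outside the Schwarzschild radius) rather than equality, but this is exactly what is needed and matches the paper's argument.
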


Even though the original motivation from general relativity may have been specific to three dimensions, because of string theory there is a great deal of interest in higher dimensional black holes.  More importantly, from a purely geometric perspective, there appears to be nothing inherently three-dimensional about the Riemannian Penrose inequality, so it is natural to wonder whether the result holds in higher dimensions.  The goal of this paper is to prove the following generalization.
\begin{thm}[Riemannian Penrose inequality in dimensions less than 8]
Let $(M^n,g)$ be a complete asymptotically flat manifold with nonnegative scalar curvature, where $n<8$.  Fix one end. Let $m$ be the mass of that end, and let $A$ be the area of an outer minimizing horizon (with one or more components). Let $\omega_{n-1}$ be the area of the standard unit $(n-1)$-sphere.  Then
$$m\geq {1\over2} \left({A\over \omega_{n-1}}\right)^{n-2\over n-1},$$
with equality if and only if the part of $(M,g)$ outside the horizon is isometric to a Riemannian Schwarzschild manifold outside its unique outer minimizing horizon.
\end{thm}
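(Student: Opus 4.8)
The plan is to generalize to all dimensions $n<8$ the conformal flow of metrics introduced by Bray \cite{bray}. Fix the given end. Since every hypersurface in $\mathcal{S}$ is the boundary of an open set containing all the \emph{other} ends of $M$, the region $M_{\mathrm{out}}$ of $M$ lying outside (and including) the horizon $\Sigma_0$ has a single asymptotically flat end, of mass $m=m(g_0)$, and $\Sigma_0$ is outer minimizing with $|\Sigma_0|_{g_0}=A$. On $M_{\mathrm{out}}$ I would define a one-parameter family $g_t=u_t^{4/(n-2)}g_0$ by the coupled system $u_0\equiv 1$, $\partial_t u_t=v_t$, where $v_t$ is the $g_0$-harmonic function on the exterior of $\Sigma_t$ with $v_t=0$ on $\Sigma_t$ and $v_t\to -e^{-t}$ as $|x|\to\infty$, extended by $v_t\equiv 0$ inside $\Sigma_t$, and where $\Sigma_t$ denotes the outermost minimal-area enclosure of $\Sigma_0$ with respect to $g_t$; thus the evolution of $u_t$ is driven by $\Sigma_t$, which in turn depends on $g_t=u_t^{4/(n-2)}g_0$.

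First I would develop the existence theory for this flow. The crucial ingredient is the regularity of the free boundaries $\Sigma_t$: because $n<8$, the singular set of an area-minimizing hypersurface in an $n$-manifold is empty by Federer dimension reduction, so every $\Sigma_t$ is a smooth embedded $g_t$-minimal hypersurface --- and this is exactly why the hypothesis must read $n<8$ and cannot be relaxed to a spin condition. One then checks that the $\Sigma_t$ move strictly outward in $t$, that $u_t\ge e^{-t}>0$ so that $g_t$ is a genuine asymptotically flat metric, and that $u_t$ is $g_0$-harmonic on the exterior of $\Sigma_t$ --- since for each $s\le t$ the surface $\Sigma_s$ lies inside $\Sigma_t$, so $v_s$ is harmonic there, and hence so is $u_t=1+\int_0^t v_s\,ds$. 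The scalar-curvature formula for a harmonic conformal change then gives $R_{g_t}\ge 0$ on $(M_t,g_t)$, the closure of the exterior of $\Sigma_t$ carrying the induced metric, whose boundary $\Sigma_t$ is $g_t$-minimal.

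Next come the two monotonicity statements that drive the argument. Because $\Sigma_t$ is $g_t$-minimal (so the first variation of area under moving it vanishes) and $v_t$ vanishes on $\Sigma_t$ (so $\partial_t\,dA_{g_t}$ vanishes there), a direct computation gives $\frac{d}{dt}|\Sigma_t|_{g_t}=0$, so the enclosed area stays equal to $A$. The heart of the proof is that the ADM mass $m(g_t)$ is non-increasing. Writing the harmonic expansions of $u_t$ and $v_t$ at infinity and using $\partial_t u_t=v_t$, one computes that $m'(t)$ equals a fixed negative constant times $m(g_t)-\phi_1(t)$, where $\phi_1(t)\ge 0$ is the coefficient of $|x|^{2-n}$ in the asymptotic expansion of the $g_t$-harmonic function on $(M_t,g_t)$ that equals $1$ on $\Sigma_t$ and tends to $0$ at infinity. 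To see that $m(g_t)\ge\phi_1(t)$, reflect $(M_t,g_t)$ across the $g_t$-minimal hypersurface $\Sigma_t$ to obtain a complete asymptotically flat manifold with two ends, $R\ge 0$, and no boundary; a harmonic conformal change compactifying one of these ends then produces a complete asymptotically flat manifold with $R\ge 0$, a single end, and no boundary, whose ADM mass turns out to be precisely $m(g_t)-\phi_1(t)$, so the Positive Mass Theorem forces it to be nonnegative. I expect this mass-monotonicity step to be the main obstacle: the reflected manifold is generically non-spin, so one genuinely needs the $n<8$ case of the Positive Mass Theorem (which itself re-uses minimal-surface regularity available only for $n\le 7$), and the reflection/conformal-change bookkeeping, the asymptotic expansions with exponent $2-n$, and the identification of $m'(t)$ all have to be carried out with care in general dimension.

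Finally I would analyze the flow as $t\to\infty$. Using the monotonicity together with a priori control on the expansions coming from the flow equations, one shows that $(M_t,g_t)$, after a $t$-dependent rescaling, converges to a Schwarzschild manifold outside its outer minimizing horizon, of some mass $m_\infty\ge 0$. Since $|\Sigma_t|_{g_t}=A$ for all $t$ while the scale-invariant quantity $m(g_t)\,|\Sigma_t|_{g_t}^{-(n-2)/(n-1)}$ is monotone and passes to the limit, the Schwarzschild mass--area relation gives $m_\infty=\tfrac12(A/\omega_{n-1})^{(n-2)/(n-1)}$; combining this with $m=m(g_0)\ge m(g_t)\to m_\infty$ yields $m\ge\tfrac12(A/\omega_{n-1})^{(n-2)/(n-1)}$. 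For the rigidity statement, equality forces $m(g_t)\equiv m$, hence $m(g_t)=\phi_1(t)$ for every $t$; that is, the Positive Mass Theorem holds with equality for each of the auxiliary manifolds constructed above, so its rigidity clause makes each of them flat, and unwinding this shows that $(M_{\mathrm{out}},g_0)$ is isometric to a Schwarzschild manifold outside its unique outer minimizing horizon (one also verifies along the way that $\Sigma_0$ must then be connected).
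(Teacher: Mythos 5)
Your strategy is the same as the paper's (Bray's conformal flow, constancy of $|\Sigma_t|_{g_t}$, mass monotonicity via doubling across the minimal $\Sigma_t$ and conformal compactification plus the $n<8$ Positive Mass Theorem), and the parts you spell out in detail are exactly the parts that carry over from \cite{bray} essentially unchanged. The genuine gap is the step you dispatch in one sentence: ``one shows that $(M_t,g_t)$, after a $t$-dependent rescaling, converges to a Schwarzschild manifold.'' That is precisely the part that does \emph{not} carry over to higher dimensions and is the new content of this paper. To make it work you need: (i) not just $\tilde m(t)\ge 0$ but $\tilde m(t)\to 0$, which is proved via a separate monotonicity (of $e^{2t}(m(t)+m'(t))$) and a derivative bound, not merely from $m(t)$ being nonincreasing and bounded below; (ii) a quantitative stability version of the equality case of the Positive Mass Theorem (Theorem \ref{strongPMT}): small mass of the compactified manifold must force the harmonic conformal factor $W_t$ to be uniformly close to $1$ outside a fixed ball. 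In dimension 3 Bray proved this with spinors; for general $n<8$ the manifolds need not be spin, so this requires a genuinely new proof, given in \cite{strongPMT}; (iii) uniform localization of the horizon, Lemma \ref{rmax}, i.e.\ $\Sigma^*(t)$ stays inside a fixed coordinate sphere. In \cite{bray} this came from Gauss--Bonnet curvature estimates and a three-dimensional Harnack-type inequality, neither available here; the paper replaces them with a lower area bound for $\gamma$-almost area-minimizing currents (Lemma \ref{gamma}) combined with explicit barrier estimates on $\UU_t$. Without (ii) and (iii), ``a priori control on the expansions coming from the flow equations'' does not yield convergence of the normalized flow to Schwarzschild, nor even that $\Sigma^*(t)$ stays in a compact region where such convergence could be read off.

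Two further points. You must also rule out $M=\lim_{t\to\infty}m(t)=0$ (Lemma \ref{Mnotzero}); if $M=0$ the limit is flat and your mass--area identification collapses, and excluding this takes a separate argument (the paper runs the same energy/co-area argument as in Lemma \ref{lastlemma} after showing $\Sigma^*(t_i)$ cannot eventually hide inside a small sphere, using $|S_{R'}|_{G_t}\ge A$). Finally, your claim that $m_\infty={1\over2}(A/\omega_{n-1})^{(n-2)/(n-1)}$ because the scale-invariant ratio ``passes to the limit'' is unjustified as stated: a priori the area of $\Sigma^*(t)$ could degenerate in the limit, and what the paper actually establishes is the inequality $A\le\omega_{n-1}(2M)^{(n-1)/(n-2)}$, obtained by comparing the outer-minimizing $\Sigma^*(t)$ with the coordinate sphere of radius $\rsch(M)+\epsilon$ after proving (via the Hausdorff convergence and the energy/co-area contradiction of Lemma \ref{lastlemma}) that $\Sigma^*(t)$ converges to the sphere of radius $\rsch(M)$. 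That inequality together with $m=m(0)\ge M$ is what gives the theorem, and the equality case then follows as you describe, via the rigidity of the (corner-adapted) Positive Mass Theorem.
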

Our proof is limited to dimensions less than 8 for the same reason that Schoen and Yau's proof is limited; we need to use regularity of minimal hypersurfaces.

The geometry inside the horizon plays no role at all in the proof of the theorem.  Accordingly, our objective is to prove the following theorem.
\begin{thm}\label{main}
Let $(M^n,g)$ be a complete one-ended asymptotically flat manifold with boundary, where $n<8$. If $(M,g)$ has nonnegative scalar curvature, and if the boundary is an outer minimizing horizon (with one or more components) with total area $A$, then
$$m\geq {1\over2} \left({A\over \omega_{n-1}}\right)^{n-2\over n-1}$$
with equality if and only if $(M,g)$ is isometric to a Riemannian Schwarzschild manifold outside its unique outer minimizing horizon.
\end{thm}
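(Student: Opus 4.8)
The plan is to carry over H.~Bray's conformal flow of metrics from \cite{bray} to dimension $n$. Starting from $g_0 = g$, we construct a one-parameter family $g_t = u_t^{4/(n-2)}g_0$ for $t \ge 0$, designed so that $(M,g_t)$ flows continuously toward a Riemannian Schwarzschild manifold while the total horizon area stays fixed, the scalar curvature stays nonnegative, and the ADM mass does not increase. The conformal factor is generated by $u_0 \equiv 1$ and $\partial_t u_t = v_t$, where $v_t$ is the function on $M$ that vanishes on and inside the outermost minimal-area enclosure $\Sigma(t)$ of the horizon with respect to $g_t$, is $g_t$-harmonic outside $\Sigma(t)$, and tends to $-e^{-t}$ at infinity (so $u_t \to e^{-t}$ at infinity). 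The first task is to show this coupled system is well posed for all $t \ge 0$: one must make sense of $\Sigma(t)$ as a hypersurface, show that $u_t$ stays positive and appropriately regular, and show that $\Sigma(t)$ is nondecreasing as $t$ increases, continuous except for possible outward jumps, with $\Sigma(0) = \partial M$ (here the hypothesis that the boundary is \emph{outer minimizing} is used). This step relies decisively on regularity of minimal hypersurfaces: for $n < 8$, area-minimizing boundaries are smooth, so $\Sigma(t)$ is a smooth embedded hypersurface, and this is exactly the restriction that confines the theorem to $n<8$, just as in the Schoen--Yau proof of the Positive Mass Theorem. The existence and regularity theory of \cite{bray}---together with (possibly after a preliminary approximation) a reduction to metrics that are harmonically flat near infinity---must be redone with the exponent $4/(n-2)$ and the $n$-dimensional conformal Laplacian replacing their three-dimensional counterparts.

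Granting the flow, the second task is the three invariance/monotonicity statements. First, $R_{g_t} \ge 0$ for all $t$: by the conformal transformation law $R_{g_t} = u_t^{-(n+2)/(n-2)}\bigl(R_{g_0}u_t - \frac{4(n-1)}{n-2}\Delta_{g_0}u_t\bigr)$, and since $R_{g_0} \ge 0$ and $u_t > 0$, it suffices that $u_t$ is $g_0$-superharmonic; because $u_t = 1 + \int_0^t v_s\,ds$ with each $v_s \le 0$ vanishing inside $\Sigma(s)$ and harmonic outside it, a maximum-principle and interface argument in the spirit of \cite{bray} yields $\Delta_{g_0}u_t \le 0$. Second, the area $|\Sigma(t)|_{g_t}$ is independent of $t$: since $v_t = 0$ on $\Sigma(t)$, the metric $g_t$ is stationary to first order along $\Sigma(t)$; since $\Sigma(t)$ is $g_t$-minimal, the first variation of area under the normal motion of $\Sigma(t)$ vanishes; and the outward jumps of $\Sigma(t)$ cannot increase area, by the minimal-area-enclosure property, so the area is exactly constant. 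Third, $m(g_t)$ is nonincreasing: writing the ADM mass of $g_t$ in its own asymptotically flat chart (which differs from that of $g_0$ by the scaling $e^{-t}$ coming from $u_t(\infty)=e^{-t}$), differentiating in $t$, and integrating $\Delta_{g_t}v_t = 0$ by parts over $M\smallsetminus\Sigma(t)$, one expresses $\frac{d}{dt}m(g_t)$ through the flux of $v_t$ at infinity and a bulk term; using $R_{g_t}\ge 0$, $v_t \le 0$, the sign of $\partial_\nu v_t$ on $\Sigma(t)$, and the outer-minimizing property (via an application of the Positive Mass Theorem to a doubled manifold, as in \cite{bray}), this quantity is $\le 0$.

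The third task is the asymptotic analysis as $t\to\infty$. One shows that the conformal factor homogenizes---on a Schwarzschild manifold the flow is exactly the scaling flow, so Schwarzschild is fixed up to diffeomorphism and scaling, and in general the neck around $\Sigma(t)$ develops into an ever-longer Schwarzschild neck---so that $(M\smallsetminus\Sigma(t), g_t)$, after the natural rescaling, converges to a Schwarzschild manifold of some mass $m_\infty \ge 0$ outside its horizon, with $\Sigma(t)$ converging to the Schwarzschild horizon. By the second task the limiting horizon has area $A$, so the Schwarzschild mass--area relation recorded above gives $m_\infty = \frac{1}{2}(A/\omega_{n-1})^{(n-2)/(n-1)}$. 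Combining with the third monotonicity statement, $m = m(g_0) \ge \lim_{t\to\infty} m(g_t) = m_\infty = \frac{1}{2}(A/\omega_{n-1})^{(n-2)/(n-1)}$, which is the desired inequality.

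For rigidity, suppose equality holds. Then $m(g_t) \equiv m$, so every inequality entering the proof that $m(g_t)$ is nonincreasing is an equality for all $t$; in particular the flux of $v_t$ is forced to vanish and the bulk term must vanish, which pins down $v_0$ to be the Schwarzschild profile and, through the equality case of the Positive Mass Theorem (valid for $n<8$, applied to the doubled manifold), forces $(M,g_0)$ to be isometric to a Schwarzschild manifold outside its horizon; uniqueness of the outer minimizing horizon in Schwarzschild was noted above. The main obstacle is the combination of well-posedness of the flow for all time and the analysis of its $t\to\infty$ limit together with the \emph{exact} constancy of $|\Sigma(t)|_{g_t}$: these are the most delicate parts of \cite{bray}, and carrying them out in dimension $n$ requires re-establishing Bray's geometric-measure-theory and elliptic estimates, leaning at several points on the regularity of area-minimizing hypersurfaces that is available precisely when $n<8$.
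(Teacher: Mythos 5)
Your outline reproduces the parts of Bray's argument that carry over to $3<n<8$ essentially verbatim (existence of the flow, constancy of $|\Sigma(t)|_{g_t}$, monotonicity of $m(t)$ via the doubled and conformally closed manifold plus the Positive Mass Theorem, and the rigidity argument), and those steps are fine. The genuine gap is in your third task, which you present as a routine re-derivation of Bray's asymptotic analysis ``with the exponent $4/(n-2)$'': that is precisely the part of \cite{bray} that does \emph{not} generalize, and it is where all the new work lies. Two dimension-specific inputs of the three-dimensional proof break down. First, the control of $\Sigma(t)$ needed to prevent the flowing horizon from escaping to infinity in the rescaled picture (Lemma \ref{rmax} here) was obtained in \cite{bray} from curvature estimates via the Gauss--Bonnet theorem together with a Harnack-type inequality from \cite{brayiga}, both strictly three-dimensional; the replacement is an elementary lower area bound for $\gamma$-almost area-minimizing currents (Lemma \ref{gamma}) fed into a bootstrap in $t$ (Lemma \ref{bounded}), which your proposal neither supplies nor identifies as necessary. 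Second, to convert $\tilde m(t)\to 0$ into convergence of the conformal factor $W_t\to 1$ one needs a quantitative strengthening of the rigidity case of the Positive Mass Theorem (Theorem \ref{strongPMT}); in \cite{bray} this was proved with spinors, which is unavailable for general manifolds of dimension $4\le n\le 7$, so a different proof (the subject of \cite{strongPMT}) is required. Saying that one must ``re-establish Bray's geometric-measure-theory and elliptic estimates'' does not meet this: the estimates in question are not dimension-free, and without substitutes the limit $m_\infty$ and the convergence of $\Sigma(t)$ to the Schwarzschild horizon are unsupported.

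Two smaller points. You also need the analogue of Lemma \ref{Mnotzero}, i.e.\ that $\lim_{t\to\infty} m(t)>0$, before you can even set up the rescaled coordinates and speak of a limiting Schwarzschild manifold; this requires its own argument (here, an energy/coarea contradiction). And your claim that the limiting horizon has area exactly $A$ is stronger than what is needed or proved: the argument only shows that $\Sigma^*(t)$ is eventually enclosed by the sphere of radius $\rsch(M)+\epsilon$, and the outer-minimizing property then gives $A\le \omega_{n-1}(2M)^{(n-1)/(n-2)}$, which suffices for the inequality; asserting equality of areas in the limit would itself require justification you have not given.
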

We will prove this theorem using the first author's conformal flow method \cite{bray}.  One might wonder whether Huisken and Ilmanen's inverse mean curvature flow method \cite{huiilm} could also be used for this purpose.  Unfortunately, since the Gauss-Bonnet Theorem lies at the heart of that method, it would require major new insights to adapt it higher dimensions.

\section{Overview of proof}
The vast majority of the first author's proof of the Riemannian Penrose inequality in dimension three applies to dimensions less than 8 \cite{bray}.  In this section we will review the main features of the proof and describe the parts that require modification.  Since a large portion of our proof is actually contained in \cite{bray}, we will try to maintain consistent notation.  The main technical tool that we will employ is the conformal flow.

\begin{defin}
Let $M$ be a manifold with a distinguished end.  Let $g_t$ be a family of metrics on $M$, and let $\Sigma(t)$ be a family of hypersurfaces in $\mathcal{S}$ such that $g_t(x)$ is Lipschitz in $t$, $C^1$ in $x$, and smooth in $x$ outside $\Sigma(t)$.  We say that $(M,g_t, \Sigma(t))$ is a \emph{conformal flow} if and only if the following conditions hold for each $t$:
\begin{itemize}
\item  $(M,g_t)$ outside $\Sigma(t)$ is a complete asymptotically flat manifold with boundary, and it has nonnegative scalar curvature.
\item  $\Sigma(t)$ is an outer minimizing horizon in $(M,g_t)$.
\item  ${d\over dt}g_t={4\over n-2}\nu_t g_t$, where $\nu_t(x)=0$ inside $\Sigma(t)$, and outside $\Sigma(t)$, $\nu_t$ is the unique solution to the Dirichlet problem
$$\left\{
\begin{array}{rcll}
 \Delta_{g_t}\nu_t(x)&=&0&\text{ outside }\Sigma(t)\\
\nu_t(x)&=&0&\text{ at }\Sigma(t)\\
\lim_{x\to\infty} \nu_t(x)&=&-1& 
\end{array}
\right.$$

\end{itemize}
\end{defin}
The formulation of the conformal flow in \cite{bray} is slightly different but defines the same flow.  Instead of using the last item in the above definition, we could set $g_t=u_t^{4\over n-2}g_0$ and demand that
$${d\over dt}u_t=v_t$$
where $v_t(x)=0$ inside $\Sigma(t)$, and outside $\Sigma(t)$, $v_t$ is the unique solution to the Dirichlet problem
$$\left\{
\begin{array}{rcll}
 \Delta_{g_0}v_t(x)&=&0&\text{ outside }\Sigma(t)\\
v_t(x)&=&0&\text{ at }\Sigma(t)\\
\lim_{x\to\infty} v_t(x)&=&-e^{-t}& 
\end{array}
\right.$$
The fact that these two formulations are equivalent follows from the following simple lemma, which we will use repeatedly.
\begin{lem}\label{simpleformula}
If $g_1$ and $g_2$ are smooth metrics and $\phi$ is a smooth function such that
$$g_2=\phi^{4\over n-2}g_1,$$
then for any smooth function $f$,
$$\Delta_{g_1}(f\phi)=\phi^{n+2\over n-2}\Delta_{g_2}f+f\Delta_{g_1}\phi.$$
\end{lem}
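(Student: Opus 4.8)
The plan is to deduce this from the standard transformation law for the Laplace--Beltrami operator under a conformal change of metric, combined with the ordinary product rule for $\Delta$. Everything is a one-line computation; there is no genuine obstacle, so the point of the proof is simply to organize the bookkeeping of exponents correctly. I fix once and for all the convention that $\Delta_g$ is the divergence of the gradient (so it agrees with $\sum_i\partial_i^2$ on flat $\rr^n$); the identity is insensitive to this choice, since reversing the sign of $\Delta$ reverses both sides simultaneously.

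First I would write the conformal factor additively, $\phi^{4/(n-2)}=e^{2\omega}$ with $\omega=\tfrac{2}{n-2}\log\phi$, so that all $g_1$-gradients satisfy $\nabla\omega=\tfrac{2}{n-2}\,\phi^{-1}\nabla\phi$. For a conformal change $g_2=e^{2\omega}g_1$ in dimension $n$ one has
$$\Delta_{g_2}f=e^{-2\omega}\bigl(\Delta_{g_1}f+(n-2)\langle\nabla\omega,\nabla f\rangle_{g_1}\bigr),$$
which I would either quote or re-derive in two lines from $\Delta_g f=\tfrac{1}{\sqrt{\det g}}\,\partial_i\bigl(\sqrt{\det g}\,g^{ij}\partial_j f\bigr)$ together with $\sqrt{\det g_2}=e^{n\omega}\sqrt{\det g_1}$ and $g_2^{ij}=e^{-2\omega}g_1^{ij}$. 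Substituting $\omega=\tfrac{2}{n-2}\log\phi$ collapses the factor $(n-2)$ and yields
$$\Delta_{g_2}f=\phi^{-4/(n-2)}\Bigl(\Delta_{g_1}f+\tfrac{2}{\phi}\langle\nabla\phi,\nabla f\rangle_{g_1}\Bigr).$$

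Next I would multiply through by $\phi^{(n+2)/(n-2)}$ and use $(n+2)/(n-2)-4/(n-2)=1$ to get
$$\phi^{(n+2)/(n-2)}\Delta_{g_2}f=\phi\,\Delta_{g_1}f+2\langle\nabla\phi,\nabla f\rangle_{g_1}.$$
Finally, the product rule $\Delta_{g_1}(\phi f)=\phi\,\Delta_{g_1}f+2\langle\nabla\phi,\nabla f\rangle_{g_1}+f\,\Delta_{g_1}\phi$ identifies the right-hand side with $\Delta_{g_1}(\phi f)-f\,\Delta_{g_1}\phi$, which rearranges to the claimed formula $\Delta_{g_1}(f\phi)=\phi^{(n+2)/(n-2)}\Delta_{g_2}f+f\,\Delta_{g_1}\phi$.

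An alternative, closer to the scalar-curvature machinery used elsewhere in the paper, is to invoke the conformal covariance $L_{g_2}f=\phi^{-(n+2)/(n-2)}L_{g_1}(\phi f)$ of the conformal Laplacian $L_g=\Delta_g-\tfrac{n-2}{4(n-1)}R_g$ together with the transformation law for $R_g$ under $g_2=\phi^{4/(n-2)}g_1$; expanding these and observing that all curvature terms cancel gives the same identity. I would nonetheless present the direct computation above, since it is shorter and avoids invoking curvature identities for what is ultimately a purely second-order statement about $\Delta$.
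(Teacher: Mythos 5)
Your proof is correct: the conformal transformation law $\Delta_{g_2}f=e^{-2\omega}\bigl(\Delta_{g_1}f+(n-2)\langle\nabla\omega,\nabla f\rangle_{g_1}\bigr)$ is the standard identity, the substitution $\omega=\tfrac{2}{n-2}\log\phi$ and the exponent bookkeeping $(n+2)/(n-2)-4/(n-2)=1$ are right, and the product rule step closes the argument cleanly. The paper states this lemma without proof (it is the familiar conformal covariance of the Laplacian, equivalently of the conformal Laplacian with the curvature terms cancelling, as you note), so your direct computation is exactly the intended verification and nothing is missing.
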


\begin{thm}
Given initial data $(M^n,g_0,\Sigma(0))$ satisfying the first two properties of the conformal flow described above, with $n<8$, there exists a conformal flow $(M,g_t,\Sigma(t))$ for all $t\geq0$.  Moreover,
\begin{itemize}
\item For all $t_2>t_1\geq0$, $\Sigma(t_2)$ encloses $\Sigma(t_1)$ without touching it.
\item $\Sigma(t)$ can ``jump'' at most countably many times.  At these jump times, we write $\Sigma^-(t)$ and $\Sigma^+(t)$ to denote the hypersurface ``before'' and ``after'' it jumps, respectively.\footnote{See \cite[Section 4]{bray} for a precise statement.  With the definition of the conformal flow given above, at a jump time, $\Sigma(t)$ could lie somewhere between $\Sigma^-(t)$ and $\Sigma^+(t)$.  However, in the construction of the conformal flow in \cite[Section 4]{bray}, $\Sigma(t)$ is the outermost horizon in $(M,g_t)$ containing $\Sigma(0)$, and consequently, we have $\Sigma(t)=\Sigma^+(t)$ for $t>0$.}
\end{itemize}
\end{thm}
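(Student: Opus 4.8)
The plan is to follow the discretization scheme of \cite[Section 4]{bray} and to check that each ingredient survives in the range $3\le n<8$. Fix a step size $\epsilon>0$ and construct an approximate flow inductively: set $u^{(\epsilon)}_{0}\equiv 1$ and $\Sigma^{(\epsilon)}(0)=\Sigma(0)$; given $\bigl(u^{(\epsilon)}_{k\epsilon},\Sigma^{(\epsilon)}(k\epsilon)\bigr)$, let $v$ be the solution of $\Delta_{g_0}v=0$ outside $\Sigma^{(\epsilon)}(k\epsilon)$ with $v=0$ on and inside that surface and $v\to -e^{-k\epsilon}$ at infinity; put $u^{(\epsilon)}_{(k+1)\epsilon}=u^{(\epsilon)}_{k\epsilon}+\epsilon v$; and let $\Sigma^{(\epsilon)}\bigl((k+1)\epsilon\bigr)$ be the outermost minimal area enclosure of $\Sigma(0)$ with respect to the metric $\bigl(u^{(\epsilon)}_{(k+1)\epsilon}\bigr)^{4/(n-2)}g_0$. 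Interpolating linearly in $t$ produces a family of approximate flows, and the theorem reduces to: (i) estimates on $u^{(\epsilon)}$ uniform in $\epsilon$; (ii) existence and smoothness of the outermost minimal area enclosures; and (iii) a compactness argument as $\epsilon\to 0$. This is exactly the architecture of \cite{bray}, so the task is to locate where the dimension enters.

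Step (i) is essentially formal and dimension-free. Each increment $\epsilon v$ is $g_0$-harmonic outside the current surface, is nonpositive by the maximum principle, and is bounded below by $-e^{-k\epsilon}\ge -1$; hence the functions $u^{(\epsilon)}_{k\epsilon}$ decrease in $k$, equal $1$ on $\Sigma(0)$ (which lies inside every later surface), tend to $e^{-k\epsilon}$ at infinity, are uniformly positive and bounded on compact sets over any finite time interval, and are uniformly Lipschitz in $t$. At any point that has always remained outside the surfaces, $u_t$ is a decreasing limit of $g_0$-harmonic functions and hence $g_0$-harmonic; once a point is engulfed, $u_t$ is frozen, so $u_t$ is $g_0$-superharmonic on all of $M\smallsetminus\Sigma(0)$, i.e.\ $\Delta_{g_0}u_t\le 0$. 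Combined with $R_{g_0}\ge 0$ and $u_t>0$, the conformal transformation law for scalar curvature---a consequence of Lemma \ref{simpleformula}---yields $R_{g_t}\ge 0$, while the harmonic control of $u_t-e^{-t}$ near infinity keeps the metrics asymptotically flat with a well-defined mass. All of this transcribes directly from \cite{bray}.

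Step (ii) is the only genuinely dimensional point. For each $t$, $\Sigma(t)$ is the outermost minimal area enclosure of $\Sigma(0)$ in $(M,g_t)$; its existence and outermostness follow from compactness and lower semicontinuity for sets of finite perimeter in the asymptotically flat manifold $(M,g_t)$, using $\Sigma(0)$ itself as a competitor for the area bound. \textbf{The main obstacle is regularity}: to conclude that $\Sigma(t)$ lies in $\mathcal{S}$---that it is a smooth compact hypersurface---one invokes the interior regularity theory for area-minimizing boundaries of De Giorgi, Federer and Simons, whose singular set has Hausdorff codimension at least $7$ and is therefore empty precisely when $n\le 7$; the portions of $\Sigma(t)$ that coincide with $\Sigma(0)$ require in addition the obstacle-type regularity for area minimizers constrained by a smooth hypersurface, again available in this range. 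This is precisely the restriction that confines the Schoen--Yau proof of the Positive Mass Theorem to $n<8$, and it is the sole reason our statement is limited to $n<8$.

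For step (iii), the uniform Lipschitz-in-$t$ and elliptic estimates allow Arzel\`a--Ascoli to extract $u^{(\epsilon)}\to u_t$ locally uniformly, with $u_t$ Lipschitz in $t$, $C^1$ in $x$, and smooth away from $\Sigma(t)$; the scalar curvature, asymptotic flatness and horizon conditions pass to the limit, so $(M,g_t,\Sigma(t))$ is a conformal flow. The enclosure property is first established in the discrete model: passing from the metric at step $k$ to that at step $k+1$ multiplies it by a factor that is $\le 1$ everywhere and equals $1$ on and inside $\Sigma^{(\epsilon)}(k\epsilon)$, so any competing enclosure of $\Sigma(0)$ that dips inside $\Sigma^{(\epsilon)}(k\epsilon)$ can be replaced by its union with the region bounded by $\Sigma^{(\epsilon)}(k\epsilon)$ without increasing the new area; hence the new outermost minimizer contains $\Sigma^{(\epsilon)}(k\epsilon)$, and the inclusion persists in the limit. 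Strictness---``without touching''---comes from the strong maximum principle applied to the minimal hypersurfaces $\Sigma(t_1)$ and $\Sigma(t_2)$ in their respective metrics, which are conformally related by a factor strictly less than $1$ away from $\Sigma(t_1)$, so tangency is impossible. Finally, the $g_0$-volume of the region between $\Sigma(0)$ and $\Sigma(t)$ is nondecreasing in $t$, and a monotone function has at most countably many discontinuities; these are the jump times, and at such a time $\Sigma^-(t)$ and $\Sigma^+(t)$ are the inner and outer limits of $\Sigma(s)$ as $s\uparrow t$ and $s\downarrow t$. As \cite[Section 4]{bray} notes, in this construction $\Sigma(t)$ is always the outermost horizon containing $\Sigma(0)$, so $\Sigma(t)=\Sigma^+(t)$ for $t>0$.
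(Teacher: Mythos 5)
Your proposal follows exactly the route the paper takes: the paper does not reprove this theorem but states that the proof of \cite[Theorem 2]{bray} carries over verbatim, namely a discrete-time approximation with outermost minimal area enclosures at each step, a limit as the step size tends to zero, and the observation that the only dimensional input is the regularity theory guaranteeing smooth outermost minimal area enclosures for $n<8$. Your sketch reproduces this same architecture and correctly identifies the same (and only) place where the dimension enters, so it is consistent with the paper's treatment.
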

The proof of this theorem in \cite[Theorem 2]{bray} is unchanged in higher dimensions, as long as $n<8$.  The basic idea behind the proof is to use a discrete time approximation, and then take the limit as the length of the discrete time intervals approaches zero.   The $n<8$ hypothesis is required in the proof in order to find smooth outermost minimal area enclosures.  The only other place that this dimensional restriction will be used again is when we invoke the Positive Mass Theorem.

To prove our main theorem (Theorem \ref{main}), we will use the hypotheses of the theorem as initial data for the conformal flow and 
prove that the conformal flow has the following properties:
\begin{itemize}
\item The area of $\Sigma(t)$ in $(M,g_t)$ is constant in $t$.  Call it $A$.
\item The mass of $(M,g_t)$, which we will call $m(t)$, is nonincreasing.
\item With the right choice of end coordinates, the metric $g_t$ outside $\Sigma(t)$ converges to a Schwarzschild metric.
\item The area of the horizon in this Schwarzschild manifold is greater than or equal to $A$.
\end{itemize}
Once we have established these properties, the main theorem follows immediately.\footnote{Except for the case of equality, which requires an additional simple argument.}

\begin{lem}
The area of $\Sigma(t)$ in $(M,g_t)$ is constant in $t$.
\end{lem}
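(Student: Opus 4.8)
The plan is to show that $A(t):=|\Sigma(t)|_{g_t}$ is continuous in $t$ and is constant on each interval free of jump times; since there are at most countably many jump times, a continuous function with this property is globally constant. So there are two things to check: that $A$ is constant between jumps, via a first-variation computation, and that $A$ is continuous across each jump time, via the minimal-area-enclosure and outer minimizing properties built into the flow.

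For constancy between jumps I would differentiate $A(t)$ using the general formula for the rate of change of area when \emph{both} the ambient metric and the hypersurface move; after discarding a tangential-motion term that integrates to zero on the closed hypersurface $\Sigma(t)$, this reads
\[\frac{d}{dt}A(t)=\int_{\Sigma(t)}\Bigl(\tfrac12\,\mathrm{tr}_{\Sigma(t)}(\dot g_t)+H_{g_t}\,\phi\Bigr)\,d\mu_{g_t},\]
where $H_{g_t}$ is the mean curvature of $\Sigma(t)$ in $(M,g_t)$ and $\phi$ is the normal speed of $\Sigma(t)$. The second term vanishes identically because $\Sigma(t)$ is a horizon, hence a minimal hypersurface, so $H_{g_t}\equiv 0$ on it. For the first term, $\dot g_t=\tfrac{4}{n-2}\nu_t\, g_t$, so $\mathrm{tr}_{\Sigma(t)}(\dot g_t)=\tfrac{4(n-1)}{n-2}\,\nu_t$, and this too vanishes on $\Sigma(t)$ since $\nu_t=0$ there by the definition of the conformal flow. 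Hence $A'(t)=0$ a.e.\ on any interval free of jump times; since $g_t$ is Lipschitz in $t$ and $\Sigma(t)$ moves with bounded speed there, $A$ is Lipschitz, hence constant, on each such interval.

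For continuity at a jump time $t_0$ I would work entirely in the metric $g_{t_0}$ and compare $\Sigma^-(t_0)$ with $\Sigma^+(t_0)$, which strictly encloses it. On one hand, $\Sigma^+(t_0)=\Sigma(t_0)$ is, by the construction of the flow in \cite[Section 4]{bray}, the outermost minimal area enclosure of $\Sigma(0)$ in $(M,g_{t_0})$; being a minimal area enclosure, $|\Sigma^+(t_0)|_{g_{t_0}}\le|S|_{g_{t_0}}$ for every $S\in\mathcal S$ enclosing $\Sigma(0)$, and since $\Sigma^-(t_0)$ is such an $S$ we get $|\Sigma^+(t_0)|_{g_{t_0}}\le|\Sigma^-(t_0)|_{g_{t_0}}$. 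On the other hand, any $S\in\mathcal S$ enclosing $\Sigma^-(t_0)$ also encloses $\Sigma(t)$ for every $t<t_0$, so the outer minimizing inequality $|\Sigma(t)|_{g_t}\le|S|_{g_t}$ holds; letting $t\uparrow t_0$, using $g_t\to g_{t_0}$ and the convergence of $\Sigma(t)$ to $\Sigma^-(t_0)$, shows $\Sigma^-(t_0)$ is outer minimizing in $(M,g_{t_0})$, and taking $S=\Sigma^+(t_0)$ gives $|\Sigma^-(t_0)|_{g_{t_0}}\le|\Sigma^+(t_0)|_{g_{t_0}}$. So the two areas coincide, and since $A(t)\to|\Sigma^\mp(t_0)|_{g_{t_0}}$ as $t\to t_0^\mp$, $A$ is continuous at $t_0$.

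I expect the main obstacle to be not these two short computations but the regularity and limiting facts underneath them: that $\Sigma(t)$ varies regularly enough in $t$ between jumps for the first-variation formula and the Lipschitz bound on $A$ to be legitimate; that the one-sided limits $\Sigma^\mp(t_0)$ exist as smooth minimal hypersurfaces with $|\Sigma(t)|_{g_t}$ converging to their areas; and that the notions ``outermost minimal area enclosure'' and ``outer minimizing'' behave as used, including in the limit $t\uparrow t_0$. All of these are part of the construction of the conformal flow in \cite[Section 4]{bray} --- which is also where the restriction $n<8$ enters, through the regularity of outermost minimal area enclosures --- and since that construction goes through unchanged below dimension $8$, I would invoke it rather than reprove it.
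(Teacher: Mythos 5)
Your jump-time argument is in the right spirit (indeed, at a jump time both $\Sigma^-(t_0)$ and $\Sigma^+(t_0)$ are minimal area enclosures of $\Sigma(0)$ in $(M,g_{t_0})$, so their $g_{t_0}$-areas agree), but the between-jumps half of your proof has a genuine gap. The first-variation formula with a normal speed $\phi$, and the assertion that ``$\Sigma(t)$ moves with bounded speed, hence $A$ is Lipschitz,'' presuppose time-regularity of $t\mapsto\Sigma(t)$ that the construction of the conformal flow does not supply and that you therefore cannot simply invoke: in \cite[Section 4]{bray} the surface $\Sigma(t)$ is produced slice-by-slice as the outermost minimal area enclosure (outermost horizon containing $\Sigma(0)$) in $(M,g_t)$, and what is proved about its dependence on $t$ is only that the surfaces enclose one another monotonically and jump at most countably often --- no normal velocity, no Lipschitz-in-$t$ parametrization, and no speed bound. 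Without absolute continuity of $A$, knowing $A'(t)=0$ at non-jump times does not give constancy; worse, the countably many jump times could a priori be dense in an interval, in which case there is no ``interval free of jump times'' and your decomposition (constant between jumps plus continuity at jumps) says nothing. The two-term variation you write down is exactly the heuristic the paper states, and the paper explicitly warns that ``the proof is more subtle than this.''

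The actual argument of \cite[Section 5]{bray}, which this paper cites as carrying over verbatim, avoids differentiating the surface in $t$ altogether and instead compares time slices directly. For $t_2>t_1$: since $\Sigma(t_2)$ minimizes area among surfaces enclosing $\Sigma(0)$ in $(M,g_{t_2})$, one has $|\Sigma(t_2)|_{g_{t_2}}\le|\Sigma(t_1)|_{g_{t_2}}$; and since $v_s\equiv 0$ inside $\Sigma(s)$, which encloses $\Sigma(t_1)$ for every $s\ge t_1$, the conformal factor satisfies $u_{t_2}=u_{t_1}$ on $\Sigma(t_1)$, so $|\Sigma(t_1)|_{g_{t_2}}=|\Sigma(t_1)|_{g_{t_1}}$. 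This gives $A(t_2)\le A(t_1)$ with no regularity in $t$ needed. The reverse inequality is the delicate part: it uses the outer-minimizing property of $\Sigma(t_1)$ in $g_{t_1}$ (so $|\Sigma(t_2)|_{g_{t_1}}\ge A(t_1)$) together with quantitative control of $u_{t_2}/u_{t_1}$ on $\Sigma(t_2)$, and it is there, and at the jumps, that the real work lies; your limits $A(t)\to|\Sigma^{\mp}(t_0)|_{g_{t_0}}$ are also genuine area-convergence statements that come from these enclosure and minimization properties, not automatically from the construction. So the correct architecture is comparison inequalities between slices, as in \cite[Section 5]{bray}, rather than a first-variation computation along the flow.
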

The proof of this lemma in \cite[Section 5]{bray} is unchanged in higher dimensions.  The basic idea behind the proof is that the rate of change of the area $|\Sigma(t)|_{g_t}$ has a contribution from changing $\Sigma(t)$ while leaving $g_t$ fixed and a contribution from changing $g_t$ while leaving $\Sigma(t)$ fixed.  The first contribution is zero because $\Sigma(t)$ is minimal, and the second contribution is zero because the metric is not changing at $\Sigma(t)$.  (Specifically, ${d\over dt}g_t={4\over n-2}\nu_t g_t=0$ at $\Sigma(t)$.)  However, the proof is more subtle than this because $\Sigma(t)$ can jump.  See \cite[Section 5]{bray} for details.

In order to simplify the rest of our arguments, we use a tool called harmonic flatness.
\begin{defin}
A Riemannian manifold $(M^n,g)$ is said to be \emph{harmonically flat at infinity} if there is a compact set $K\subset M$ such that $M\smallsetminus K$ is the disjoint union of \emph{ends}, $E_k$, such that each end is diffeomorphic to $\rr^n\smallsetminus B_{r_k}(0)$, and in each of these coordinate charts, there is a (Euclidean) harmonic function $\UU$ such that
$$g_{ij}(x)=\UU(x)^{4\over n-2}\delta_{ij}.$$
In other words, each end is conformally flat with a harmonic conformal factor.
\end{defin}
Note that a harmonically flat end necessarily has zero scalar curvature.  Expanding $\UU$ in spherical harmonics in a particular end $E_k$, we see that
$$\UU(x)=a+b |x|^{2-n}+O(|x|^{1-n})$$
for some constants $a$ and $b$.  Clearly, a manifold that is harmonically flat at infinity is asymptotically flat.\footnote{However, when $a\neq 1$, it is necessary to change the distinguished coordinate chart.} A simple computation shows the following:
\begin{lem}
In the situation described above, the mass of the end $E_k$ is equal to $2ab$.
\end{lem}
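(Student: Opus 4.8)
The plan is to unwind the definition of ADM mass; the result is essentially a one-line computation once a normalization issue is handled. Because $\UU(x) = a + b|x|^{2-n} + O(|x|^{1-n})$, the metric $g_{ij} = \UU^{4/(n-2)}\delta_{ij}$ tends to $a^{4/(n-2)}\delta_{ij}$ rather than $\delta_{ij}$ at infinity, so the given chart on $E_k$ is not asymptotically flat in the sense required for the mass formula unless $a=1$. I would therefore first pass to rescaled coordinates $y = a^{2/(n-2)} x$ (note $a>0$ since $\UU>0$). Using $|x|^{2-n} = a^{2}|y|^{2-n}$ and $|dx|^{2} = a^{-4/(n-2)}|dy|^{2}$, the factors of $a$ cancel and one finds $g = \tilde\UU(y)^{4/(n-2)}|dy|^{2}$ with $\tilde\UU(y) = 1 + (ab)|y|^{2-n} + O(|y|^{1-n})$. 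These $y$-coordinates are the distinguished asymptotically flat chart in which the mass is computed, so the problem reduces to the case $a=1$ with $b$ replaced by $ab$: it suffices to show that $\UU = 1 + b|x|^{2-n} + O(|x|^{1-n})$ gives mass $2b$.

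Next I would compute the ADM integrand for a conformally flat metric. Writing $\phi = \UU^{4/(n-2)}$ so that $g_{ij} = \phi\,\delta_{ij}$, we have $g_{ij,k} = \phi_{,k}\delta_{ij}$, hence $g_{ij,i} = \phi_{,j}$ and $g_{ii,j} = n\phi_{,j}$, so that $g_{ij,i} - g_{ii,j} = -(n-1)\phi_{,j}$. Contracting with the Euclidean outward unit normal $\nu$ on $S_\sigma$, the integrand in the mass formula becomes $-(n-1)\,\partial_\nu\phi$, where $\partial_\nu$ denotes the Euclidean radial derivative.

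It then remains to expand and integrate. From $\UU = 1 + b|x|^{2-n} + O(|x|^{1-n})$ one gets $\phi = 1 + \frac{4b}{n-2}|x|^{2-n} + O(|x|^{1-n})$, hence $\partial_r\phi = \frac{4b(2-n)}{n-2}r^{1-n} + O(r^{-n}) = -4b\,r^{1-n} + O(r^{-n})$. Since $S_\sigma$ has Euclidean area $\omega_{n-1}\sigma^{n-1}$, the remainder contributes only $O(\sigma^{-1})$ and
$$m = \frac{1}{2(n-1)\omega_{n-1}}\lim_{\sigma\to\infty}\int_{S_\sigma}\bigl(-(n-1)\,\partial_\nu\phi\bigr)\,d\mu = \frac{-1}{2\omega_{n-1}}\bigl(-4b\,\omega_{n-1}\bigr) = 2b,$$
which is $2ab$ once the rescaling of the first step is undone. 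As a consistency check, the Schwarzschild metric of mass $m$ is exactly this situation with $a=1$ and $b=m/2$, and the formula returns ADM mass $m$.

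I do not expect any real obstacle here: the computation is immediate once the integrand $-(n-1)\partial_\nu\phi$ has been identified. The only points that need any attention are the coordinate rescaling $y = a^{2/(n-2)}x$ when $a\neq 1$ (already flagged in the remark above) and the trivial verification that the $O(|x|^{1-n})$ remainder in $\UU$, together with its radial derivative, decays fast enough to drop out of the boundary integral in the limit $\sigma\to\infty$.
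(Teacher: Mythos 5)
Your proof is correct, and it is exactly the ``simple computation'' the paper leaves to the reader: the identification of the ADM integrand as $-(n-1)\partial_\nu\phi$ for a conformally flat metric, the expansion $\phi=1+\tfrac{4b}{n-2}|x|^{2-n}+O(|x|^{1-n})$, and the limit over coordinate spheres give $m=2b$ when $a=1$. Your preliminary rescaling $y=a^{2/(n-2)}x$ to reduce to $a=1$ is also the right way to handle the normalization, and it matches the paper's own footnote that when $a\neq1$ one must change the distinguished coordinate chart.
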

A short argument of Schoen and Yau \cite{schyau} (see also \cite[Section 4]{montecatini}, \cite[Section 2]{bray}) implies the following lemma.
\begin{lem}
In order to prove our main theorem (Theorem \ref{main}), we may assume without loss of generality that $(M^n,g)$ is harmonically flat at infinity.
\end{lem}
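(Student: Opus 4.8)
The plan is to run a density argument in the spirit of Schoen and Yau \cite{schyau} (see also \cite[Section 4]{montecatini}, \cite[Section 2]{bray}): assuming Theorem \ref{main} has already been proved for manifolds that are harmonically flat at infinity, we approximate an arbitrary $(M^n,g)$ satisfying the hypotheses of Theorem \ref{main} by manifolds that are harmonically flat at infinity, in such a way that the mass and the horizon area converge, and then pass to the limit in the inequality. The case of equality is not covered by this reduction and will be treated separately.

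So let $(M^n,g)$ be as in Theorem \ref{main}, with boundary horizon $\Sigma$ of total area $A$ and ADM mass $m$, and fix asymptotically flat coordinates on its end. For each large $\sigma$ we form a metric $\bar g_\sigma$ on $M$ that equals $g$ where $|x|\le\sigma$, equals the Riemannian Schwarzschild metric of mass $m$ where $|x|\ge 2\sigma$, and interpolates between the two coordinate expressions on the annulus $\{\sigma\le|x|\le 2\sigma\}$ by means of a fixed cutoff function. The asymptotic decay hypotheses on $g$ make $g_{ij}$ and the Schwarzschild metric $C^2$-close on this annulus, so the scalar curvature $R_{\bar g_\sigma}$ is supported there and satisfies $\|R_{\bar g_\sigma}^{-}\|_{L^{n/2}(M)}\to 0$ as $\sigma\to\infty$. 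Since $(M,g)$ has nonnegative scalar curvature, for $\sigma$ large this smallness lets us solve the conformal Laplacian equation
$$-\frac{4(n-1)}{n-2}\,\Delta_{\bar g_\sigma}u_\sigma+R_{\bar g_\sigma}\,u_\sigma=0,\qquad u_\sigma>0,\qquad \lim_{x\to\infty}u_\sigma(x)=1,$$
with the Neumann condition $\partial_\nu u_\sigma=0$ along $\Sigma$; then $\hat g_\sigma:=u_\sigma^{4/(n-2)}\bar g_\sigma$ has $R_{\hat g_\sigma}=0\ge 0$ and $\Sigma$ is still minimal in $\hat g_\sigma$. Near infinity $\bar g_\sigma=\UU_\sigma^{4/(n-2)}\delta_{ij}$ for a Euclidean harmonic function $\UU_\sigma$, and $\Delta_{\bar g_\sigma}u_\sigma=0$ there, so Lemma \ref{simpleformula} (applied with $g_1=\delta_{ij}$, $g_2=\bar g_\sigma$, $\phi=\UU_\sigma$, $f=u_\sigma$) shows that $u_\sigma\UU_\sigma$ is Euclidean harmonic; hence $\hat g_\sigma=(u_\sigma\UU_\sigma)^{4/(n-2)}\delta_{ij}$ near infinity, i.e.\ $\hat g_\sigma$ is harmonically flat at infinity.

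Next we let $\sigma\to\infty$. Weighted elliptic estimates give $u_\sigma\to 1$ in $C^2_{loc}$ with the leading asymptotic coefficient of $u_\sigma-1$ tending to $0$, so $\hat g_\sigma\to g$ in $C^2_{loc}$ and $m(\hat g_\sigma)\to m$. Since $\Sigma$, though minimal in $\hat g_\sigma$, need not be outer minimizing there, we let $\Sigma_\sigma$ be the outermost minimal area enclosure of $\Sigma$ in $(M,\hat g_\sigma)$, which exists and is a smooth outer minimizing horizon precisely because $n<8$, and we let $M_\sigma$ be the region of $M$ outside $\Sigma_\sigma$. Because $\Sigma$ is outer minimizing in $g$ and $\hat g_\sigma\to g$ in $C^2_{loc}$, a standard lower semicontinuity argument for these enclosures gives $|\Sigma_\sigma|_{\hat g_\sigma}\to A$. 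Now $(M_\sigma,\hat g_\sigma)$ satisfies all the hypotheses of Theorem \ref{main} and is harmonically flat at infinity, so the assumed harmonically flat case yields
$$m(\hat g_\sigma)\ \ge\ \frac{1}{2}\left(\frac{|\Sigma_\sigma|_{\hat g_\sigma}}{\omega_{n-1}}\right)^{\frac{n-2}{n-1}},$$
and letting $\sigma\to\infty$ gives $m\ge\frac{1}{2}\left(A/\omega_{n-1}\right)^{\frac{n-2}{n-1}}$, which is the inequality of Theorem \ref{main}.

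The main obstacle is the first of the two analytic inputs used above: the solvability of the conformal Laplacian equation on $(M,\bar g_\sigma)$ with a positive solution tending to $1$ at infinity, together with the estimates forcing $u_\sigma\to 1$ and $m(\hat g_\sigma)\to m$. This rests on $R_g\ge 0$ and on the smallness of $\|R_{\bar g_\sigma}^{-}\|_{L^{n/2}}$, which is precisely what the decay assumptions on $g$ provide. The second input is the stability of the outermost minimal area enclosure under $C^2_{loc}$-small perturbations of the metric, which yields $|\Sigma_\sigma|_{\hat g_\sigma}\to A$. Both are supplied by the short argument of Schoen and Yau, and neither is sensitive to the dimension, except that, here as throughout the paper, $n<8$ is needed to guarantee regularity of the minimal area enclosures $\Sigma_\sigma$.
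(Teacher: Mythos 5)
Your overall strategy --- approximate by metrics that are harmonically flat at infinity, apply the already-proved case, and pass to the limit in the inequality --- is exactly the Schoen--Yau density argument that the paper invokes for this lemma, but your implementation of the conformal correction has a genuine flaw. You solve $-\frac{4(n-1)}{n-2}\Delta_{\bar g_\sigma}u_\sigma+R_{\bar g_\sigma}u_\sigma=0$ on all of $M$, i.e.\ you conformally kill the \emph{entire} scalar curvature, including the part of $R_g\ge 0$ living on the fixed compact core. Consequently $u_\sigma$ does not tend to $1$: as $\sigma\to\infty$ it converges in $C^2_{loc}$ to the solution $u_\infty$ of $-\frac{4(n-1)}{n-2}\Delta_g u+R_g u=0$ with $u\to1$ at infinity and $\partial_\nu u=0$ on $\Sigma$, and by the strong maximum principle and the Hopf lemma $u_\infty<1$ everywhere (in particular on $\Sigma$) unless $R_g\equiv0$. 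So the leading coefficient of $u_\sigma-1$ does not go to zero, $\hat g_\sigma$ does not converge to $g$, $m(\hat g_\sigma)$ converges to something strictly below $m$, and --- this is the harmful direction --- the horizon area satisfies $\limsup_\sigma|\Sigma_\sigma|_{\hat g_\sigma}\le\lim_\sigma|\Sigma|_{\hat g_\sigma}=\int_\Sigma u_\infty^{2(n-1)\over n-2}\,d\mu_g<A$. Passing to the limit then only yields $m\ge\frac12\left(A'/\omega_{n-1}\right)^{n-2\over n-1}$ for some $A'<A$, which is weaker than what Theorem \ref{main} asserts for $(M,g)$. (Losing mass is harmless: the reduction only needs $\limsup m(\hat g_\sigma)\le m$ and $\liminf|\Sigma_\sigma|_{\hat g_\sigma}\ge A$; it is the second, area, inequality that your construction destroys.)

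The fix, and the way the cited short argument actually runs, is to correct only the scalar curvature you have created: put the source term of the conformal equation only on the gluing annulus and the exterior region (where $R_g$ itself is $L^1$-small since $R_g=O(|x|^{-q})$ with $q>n$), so that the approximating metric has nonnegative --- not identically zero --- scalar curvature, agrees with $g$ up to a conformal factor tending to $1$ \emph{uniformly}, and is Euclidean-harmonically conformally flat outside the annulus. Then both the mass and the area of the outer minimizing horizon change by $o(1)$, and your limiting argument goes through; note also that making the coefficient of $|x|^{2-n}$ in $u_\sigma$ small requires using the divergence structure of the scalar curvature on the annulus, since with only $p>(n-2)/2$ the pointwise bound $O(\sigma^{-p-2})$ does not make $\|R_{\bar g_\sigma}\|_{L^1}$ of the annulus small. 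Your remaining points --- the Neumann condition preserving minimality of $\Sigma$, passing to the outermost minimal area enclosure (with $n<8$ for regularity), Lemma \ref{simpleformula} giving harmonic flatness near infinity, and deferring the equality case --- are fine and consistent with the argument the paper has in mind.
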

From now on we will always work in the situation of initial data that is harmonically flat at infinity, and then evolved by the conformal flow.  From Lemma \ref{simpleformula} it is clear that the conformal flow preserves harmonic flatness outside $\Sigma(t)$.

We now consider a third formulation of the conformal flow.  By the harmonic flatness assumption, we know that $(g_0)_{ij}(x)=\UU_0(x)^{4\over n-2} \delta_{ij}$ for some harmonic function $\UU_0$ on the exterior region $\rr^n\smallsetminus B_{\rharm}$ for some $\rharm$.  (We adopt the shorthand notation $B_{R}=B_{R}(0)$ and $S_{R}=S_{R}(0)$.)  We choose end coordinates so that $\lim_{x\to\infty}\UU_0(x)=1$, and consequently, we are not allowed to choose the constant $\rharm$ arbitrarily.  Now extend $\UU_0$ to a positive function on all of $M$ and define the metric $\gflat$ by
$$g_0=\UU_0^{4\over n-2} \gflat.$$
Note that for $|x|>\rharm$, $(\gflat)_{ij}(x)=\delta_{ij}$.  We can now reformulate the conformal flow by setting
$$g_t=\UU_t^{4\over n-2} \gflat$$
and demanding that
$${d\over dt}\UU_t=\VV_t$$
where $\VV_t(x)=0$ inside $\Sigma(t)$, and outside $\Sigma(t)$, $\VV_t$ is the unique solution to the Dirichlet problem\footnote{We know that there is a unique solution because this formulation is equivalent to the previous ones.}
$$\left\{
\begin{array}{rcll}
 \Delta_{\gflat}\VV_t-\left({\Delta_{\gflat}\UU_0\over\UU_0}\right)\VV_t &=&0&\text{ outside }\Sigma(t)\\
\VV_t(x)&=&0&\text{ at }\Sigma(t)\\
\lim_{x\to\infty} \VV_t(x)&=&-e^{-t}& 
\end{array}
\right.$$
Note that in the region outside $\Sigma(t)$ with $|x|>\rharm$, both $\UU_t$ and $\VV_t$ are (Euclidean) harmonic functions.  We summarize the relationships between the three formulations of the conformal flow:
$$\UU_t=u_t\UU_0$$
$$\VV_t=v_t\UU_0$$
$$\nu_t={v_t\over u_t}={\VV_t\over \UU_t}.$$

\begin{lem}\label{monotone}
The mass, $m(t)$, is nonincreasing.
\end{lem}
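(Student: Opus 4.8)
The plan is to follow the mass-monotonicity argument of \cite{bray}, the one place where the dimensional restriction re-enters: I would first express $m'(t)$ in terms of the asymptotics of $\UU_t$ and $\VV_t$, then build from $g_t$ an asymptotically flat metric of nonnegative scalar curvature whose ADM mass is a positive multiple of $-m'(t)$, and finish with the Positive Mass Theorem.

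First I would record the asymptotic expansions. In $\{|x|>\rharm\}$ outside $\Sigma(t)$ both $\UU_t$ and $\VV_t$ are Euclidean harmonic, and since ${d\over dt}\UU_t=\VV_t$ with $\UU_0\to1$ and $\VV_t\to-e^{-t}$ we get $\UU_t\to e^{-t}$, so
$$\UU_t(x)=e^{-t}+b_t|x|^{2-n}+O(|x|^{1-n}),\qquad \VV_t(x)=-e^{-t}+d_t|x|^{2-n}+O(|x|^{1-n}).$$
By the mass lemma $m(t)=2e^{-t}b_t$; differentiating, and noting that $d_t$ is the $|x|^{2-n}$-coefficient of ${d\over dt}\UU_t=\VV_t$ so that $\dot b_t=d_t$, gives
$$m'(t)=2e^{-t}(d_t-b_t).$$
The almost-everywhere differentiability of $m$ and the treatment of the (countably many) jump times of $\Sigma(t)$ are exactly as in \cite{bray}; granting this, together with continuity of $m$, it suffices to prove $m'(t)\le0$ wherever $m$ is differentiable, i.e.\ to prove $b_t\ge d_t$.

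The key step is to introduce, on the region $M_{\mathrm{out}}(t)$ of $M$ lying outside $\Sigma(t)$, the conformally rescaled metric
$$\tilde g_t=(1-\nu_t)^{4\over n-2}\,g_t=(\UU_t-\VV_t)^{4\over n-2}\,\gflat ,$$
using $\VV_t=\nu_t\UU_t$ for the second equality. One checks: (i) since $\nu_t\le0$ by the maximum principle, $1-\nu_t\ge1>0$, so $\tilde g_t$ is an honest metric, and since $\Delta_{g_t}\nu_t=0$ outside $\Sigma(t)$ the conformal change formula yields $R_{\tilde g_t}=(1-\nu_t)^{-{4\over n-2}}R_{g_t}\ge0$; (ii) since $\nu_t\to-1$ at infinity, $(\UU_t-\VV_t)(x)=2e^{-t}+(b_t-d_t)|x|^{2-n}+O(|x|^{1-n})$, so $\tilde g_t$ is asymptotically flat (after the coordinate change needed when the conformal factor does not tend to $1$) and the mass lemma gives $m(\tilde g_t)=4e^{-t}(b_t-d_t)$; (iii) on $\Sigma(t)$ we have $\nu_t=0$, so $g_t$ and $\tilde g_t$ induce the same metric there, and since $\nu_t$ is $g_t$-harmonic, vanishes on $\Sigma(t)$, and is negative in the interior, the Hopf lemma shows its derivative in the direction of the end is strictly negative; the conformal change formula for mean curvature then shows that $\Sigma(t)$ has strictly positive mean curvature in $(M_{\mathrm{out}}(t),\tilde g_t)$ with respect to the normal pointing toward the end.

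Finally I would double $(M_{\mathrm{out}}(t),\tilde g_t)$ across $\Sigma(t)$. The result is a complete manifold with two asymptotically flat ends, each of mass $4e^{-t}(b_t-d_t)$, which is smooth with $R\ge0$ away from $\Sigma(t)$ and, because the two mean-curvature contributions along $\Sigma(t)$ are both positive, has nonnegative scalar curvature across $\Sigma(t)$ in the distributional sense. Smoothing this corner while keeping $R\ge0$ — the mean-curvature inequality has the sign that makes this possible — and then invoking the Positive Mass Theorem in dimension $n<8$ forces $4e^{-t}(b_t-d_t)\ge0$, hence $m'(t)\le0$. I expect the main obstacle to be twofold: first, recognizing that $(1-\nu_t)^{4\over n-2}g_t$ is precisely the auxiliary metric whose mass reads off $-m'(t)$; and second, handling the corner along $\Sigma(t)$ carefully enough that the (smooth) Positive Mass Theorem applies, which is exactly the place where the hypothesis $n<8$ is used.
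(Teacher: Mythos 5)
Your reduction of the lemma to the sign of an auxiliary mass is essentially the paper's: the expansions $\UU_t=e^{-t}+b_t|x|^{2-n}+O(|x|^{1-n})$, $\VV_t=-e^{-t}+d_t|x|^{2-n}+O(|x|^{1-n})$, the identity $m'(t)=2e^{-t}(d_t-b_t)$, and the observation that the conformal metric built from $\UU_t-\VV_t$ has mass $-2m'(t)$ all agree with the proof of Lemma \ref{massderivative} (the paper uses the factor $\tfrac12(\UU_t-\VV_t)$, which only changes the constant). The genuine gap is in how you complete this metric across $\Sigma(t)$ before invoking the Positive Mass Theorem: your corner condition has the wrong sign. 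When two manifolds-with-boundary are glued along boundaries with matching induced metrics, the singular part of the distributional scalar curvature is (up to a positive factor) the sum of the mean curvatures computed with respect to the normals pointing \emph{out of} each piece; equivalently, in Miao's inside/outside formulation one needs $H_{\mathrm{inside}}\geq H_{\mathrm{outside}}$, both with respect to the normal toward infinity. In your double, each copy of $(M_{\mathrm{out}}(t),\tilde g_t)$ has boundary mean curvature $\tilde H>0$ with respect to the normal toward its own end --- which is exactly what you proved in (iii) --- hence $-\tilde H<0$ with respect to the normal pointing out of the piece, so the crease carries strictly \emph{negative} distributional scalar curvature, proportional to $-\tilde H\,\delta_{\Sigma(t)}$, and it cannot be smoothed keeping $R\geq 0$. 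Indeed, if your smoothing claim were correct it would contradict the Positive Mass Theorem outright: double the region $\{|x|\geq R\}$ of the scalar-flat metric $\bigl(1-\epsilon|x|^{2-n}\bigr)^{4\over n-2}\delta_{ij}$ across the coordinate sphere $S_R$; this satisfies all of your corner hypotheses (scalar-flat away from the crease, boundary strictly mean-convex toward each end), yet each of its two ends has mass $-2\epsilon<0$.

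This is precisely why the paper orders the two operations the other way around. It first reflects $(M,g_t)$ through $\Sigma(t)$ while that surface is still \emph{minimal}, so the mean curvatures from the two sides match (both zero) and the Lipschitz metric $\bar g_t$ is a limit of smooth metrics of nonnegative scalar curvature (\cite{bray}, Section 6, and \cite{miao}); only then does it apply the conformal factor $\omega_t$, the $\bar g_t$-harmonic function equal to $\tfrac12(1-\nu_t)$ on the kept copy and $\tfrac12(1+\nu_t)$ on the reflected copy. Being a single harmonic function on the doubled manifold, $\omega_t$ creates no new corner, and since $\omega_t\to 0$ at the reflected end, that end is compactified to a point (a removable singularity) rather than left as a second asymptotically flat end. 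Your steps (i)--(iii) are each correct; the flaw is solely that ``conformal factor on the outside region, then double'' produces a crease that is mean-convex toward infinity from both sides, which is exactly the sign of corner that the Positive Mass Theorem machinery does not tolerate.
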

The proof of this lemma in \cite[Sections 6 and 7]{bray} is also unchanged in higher dimensions.  However, the main idea used in the proof is central to this paper, so we will summarize the basic argument.

For each time $t$, consider the two-ended manifold $(\bar{M}_{\Sigma(t)},\bar{g}_t)$ obtained by reflecting the manifold $(M,g_t)$ through $\Sigma(t)$.  Let $\omega_t$ be the $g_t$-harmonic function that approaches $1$ at one end and $0$ at the other end.  We can use $\omega_t$ to conformally close the $0$-end by considering the metric $\tilde{g}_t=(\omega_t)^{4\over n-2}\bar{g}_t$ on $\bar{M}_{\Sigma(t)}$.  The result is a new one-ended manifold $(\tilde{M}_{\Sigma(t)}=\bar{M}_{\Sigma(t)}\cup\{\mathrm{pt}\},\tilde{g}_t)$ with nonnegative scalar curvature.\footnote{One can show that the singularity at $\mathrm{pt}$ is removeable.}  Similarly, if $t$ is a jump time, then we can construct $(\tilde{M}_{\Sigma^\pm(t)},\tilde{g}^\pm_t)$ by first reflecting through $\Sigma^\pm(t)$.  Lemma \ref{monotone} will follow from the following key lemma.

\begin{lem}\label{massderivative}
Let $\tilde{m}(t)$ be the mass of $(\tilde{M}_{\Sigma(t)},\tilde{g}_t)$.  If $t$ is not a jump time, then
$${d\over dt}m(t)=-2\tilde{m}(t).$$
If $t$ is a jump time, let $\tilde{m}^{\pm}(t)$ be the mass of $(\tilde{M}_{\Sigma^{\pm}(t)},\tilde{g}^\pm_t)$.  Then
$${d\over dt^\pm}m(t)=-2\tilde{m}^\pm(t),$$
where ${d\over dt^\pm}m(t)$ denotes the right and left side limits of ${d\over dt}m(t)$.

\end{lem}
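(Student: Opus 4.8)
The plan is to work entirely near the distinguished end, where $\UU_t$ and $\VV_t$ are Euclidean harmonic, and to read off both $m(t)$ and $\tilde m(t)$ from the asymptotic expansion of $\UU_t$. Since $\UU_t$ is Euclidean harmonic on $\{|x|>\rharm\}$ outside $\Sigma(t)$ and $\Sigma(t)$ is compact, for $|x|$ large we may write
$$\UU_t(x)=a_t+b_t|x|^{2-n}+O(|x|^{1-n}),\qquad \VV_t(x)=\tilde a_t+\tilde b_t|x|^{2-n}+O(|x|^{1-n}).$$
From $\frac{d}{dt}\UU_t=\VV_t$, the normalization $\UU_0\to1$, and $\lim_{x\to\infty}\VV_t=-e^{-t}$ (the third formulation of the flow), one gets $a_t=e^{-t}$ and $\tilde a_t=-e^{-t}$. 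By the lemma computing the mass of a harmonically flat end, $m(t)=2a_tb_t=2e^{-t}b_t$.

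First I would differentiate $b_t$ in $t$. Fix a non-jump time $t_0$ and pick radii $\rharm<R_1<R_2$ with $\Sigma(t)\subset B_{R_1}$ for $t$ near $t_0$. Since $\UU_t$ is Euclidean harmonic on $\{|x|\ge R_1\}$, its sphere-average over the coordinate sphere $S_R$ equals exactly $a_t+b_tR^{2-n}$ (the radially symmetric part of a harmonic exterior function with limit $a_t$ is precisely $a_t+b_tr^{2-n}$), so solving the resulting linear system in $R_1,R_2$ expresses $a_t$ and $b_t$ as fixed linear combinations of the averages of $\UU_t$ over $S_{R_1}$ and $S_{R_2}$. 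Because $g_t$ — hence $\UU_t$ — is Lipschitz in $t$ with $\frac{d}{dt}\UU_t=\VV_t$, and the spheres $S_{R_i}$ are fixed, one may differentiate under the integral; the same averaging identity applied to $\VV_t$ then gives $\frac{d}{dt}b_t=\tilde b_t$ (and $\frac{d}{dt}a_t=\tilde a_t$, consistent with $a_t=e^{-t}$). Therefore
$$\frac{d}{dt}m(t)=2\bigl(\dot a_tb_t+a_t\dot b_t\bigr)=2e^{-t}\bigl(\tilde b_t-b_t\bigr).$$

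Next I would identify $\tilde m(t)$. Near the distinguished end $\bar g_t=g_t=\UU_t^{4/(n-2)}\gflat$, so $\tilde g_t=\omega_t^{4/(n-2)}\bar g_t=(\omega_t\UU_t)^{4/(n-2)}\gflat$. The function $\omega_t=\tfrac12(1-\nu_t)$ is $g_t$-harmonic, tends to $1$ at the distinguished end, and, once $\nu_t$ is extended oddly across $\Sigma(t)$, tends to $0$ at the reflected end, so it is precisely the function used to conformally close the $0$-end. Since $\nu_t=\VV_t/\UU_t$ we have $\omega_t\UU_t=\tfrac12(\UU_t-\VV_t)$, which is Euclidean harmonic near infinity, and
$$\omega_t\UU_t=\frac{(a_t-\tilde a_t)+(b_t-\tilde b_t)|x|^{2-n}}{2}+O(|x|^{1-n})=e^{-t}+\frac{b_t-\tilde b_t}{2}\,|x|^{2-n}+O(|x|^{1-n}),$$
using $a_t-\tilde a_t=2e^{-t}$. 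Applying the harmonically flat end lemma to $(\tilde M_{\Sigma(t)},\tilde g_t)$ gives $\tilde m(t)=2\cdot e^{-t}\cdot\tfrac12(b_t-\tilde b_t)=e^{-t}(b_t-\tilde b_t)$, and comparison with the previous display yields $\frac{d}{dt}m(t)=-2\tilde m(t)$. At a jump time the asymptotic region is unchanged and $\Sigma^{\pm}(t)$ still lies in a fixed compact set, so the averaging identities hold with one-sided $t$-derivatives built from $\VV_t^{\pm}$ (the Dirichlet solution with boundary $\Sigma^{\pm}(t)$); running the same computation with $\Sigma^{\pm}(t)$ in place of $\Sigma(t)$ gives $\frac{d}{dt^{\pm}}m(t)=-2\tilde m^{\pm}(t)$.

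The step I expect to be the main obstacle is the rigorous interchange of $\frac{d}{dt}$ with the spatial limit defining $b_t$ — that is, proving $t\mapsto b_t$ is Lipschitz with derivative $\tilde b_t$ — together with the jump-time bookkeeping (existence of the one-sided limits of $\frac{d}{dt}m(t)$ and their identification with the $\Sigma^{\pm}$ constructions). Reducing $b_t$ to averages of $\UU_t$ over fixed coordinate spheres is what makes this tractable, since it converts the problem into differentiating a fixed integral; the remaining ingredients are the regularity of the conformal flow, standard interior elliptic estimates for the harmonic functions $\UU_t$ and $\VV_t$ near infinity, and the facts imported from \cite{bray} that the conformal closure is legitimate (removable singularity at the added point, nonnegative scalar curvature) and that $\omega_t=\tfrac12(1-\nu_t)$ is the correct closing factor.
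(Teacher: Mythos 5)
Your proposal is correct and takes essentially the same route as the paper's proof: expand $\UU_t$ and $\VV_t=\frac{d}{dt}\UU_t$ near infinity, read off $m(t)=2A(t)B(t)$ from the harmonically flat expansion, and identify $\tilde m(t)$ from the expansion of $\tfrac12(\UU_t-\VV_t)$ using $\omega_t=\tfrac12(1-\nu_t)$. Your only addition is the sphere-averaging argument justifying that the expansion coefficients of $\VV_t$ are the $t$-derivatives of those of $\UU_t$ (and the one-sided version at jump times), a step the paper treats as immediate.
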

Lemma \ref{monotone} follows from this lemma because the Positive Mass Theorem tells us that $\tilde{m}(t)\geq0$ (and that $\tilde{m}^\pm(t)\geq0$).  However, there is a technical point to deal with here:  Since the metric $\tilde{g}_t$ is not smooth along $\Sigma(t)$ where the gluing took place, the standard version of the Positive Mass Theorem does not immediately apply.  However, since $(\bar{M}_{\Sigma(t)},\bar{g}_t)$ was obtained by reflecting through a \emph{minimal} surface, one can show that $(\tilde{M}_{\Sigma(t)},\tilde{g}_t)$ is a limit of smooth manifolds with nonnegative scalar curvature, and we still have the desired result.  This argument was carried out in \cite[Section 6]{bray} and described in further detail in \cite{miao}.

\begin{proof}
For ease of notation, let us assume that $t$ is not a jump time.  (The proof for jump times is the same, but with $\pm$ superscripts everywhere.)
By symmetry, we know that the function $\omega_t$ used in the construction of $\tilde{g}_t$ must be ${1\over 2}(1-\nu_t)$ on one end (and ${1\over 2}(1+\nu_t)$ on the end to be closed up).  Therefore, in the one end of $\tilde{M}_{\Sigma(t)}$, for $|x|>\rharm$,
\begin{eqnarray*}
(\tilde{g}_t)_{ij}(x)&=&\left[{1\over 2}(1-\nu_t(x))\right]^{4\over n-2} (g_t)_{ij}(x)\\ 
&=&\left[{1\over 2}(1-\nu_t(x))\UU_t(x)\right]^{4\over n-2} \delta_{ij}\\ 
&=& \left[{1\over 2}(\UU_t(x)-\VV_t(x))\right]^{4\over n-2} \delta_{ij}
\end{eqnarray*}
We will now compute $\tilde{m}(t)$ by expanding ${1\over 2}(\UU_t(x)-\VV_t(x))$.  For $|x|>\rharm$, $\UU_t(x)$ is harmonic and thus we can expand it as
$$\UU_t(x)=A(t)+B(t)|x|^{2-n}+O(|x|^{1-n}).$$
Therefore
$$\VV_t(x)=A'(t)+B'(t)|x|^{2-n}+O(|x|^{1-n}).$$
We know that $A(0)=1$ and $A'(t)=\lim_{x\to\infty} \VV_t(x)=-e^{-t}$, so we can write
$$\UU_t(x)=e^{-t}+{1\over 2}e^t m(t)|x|^{2-n}+O(|x|^{1-n})$$
$$\VV_t(x)=-e^{-t}+{1\over 2}e^t (m(t)+m'(t))|x|^{2-n}+O(|x|^{1-n})$$
$${1\over 2}(\UU_t(x)-\VV_t(x))=e^{-t}-{1\over4}e^t m'(t)  |x|^{2-n}+O(|x|^{1-n})    $$
Thus $\tilde{m}(t)=-{1\over 2}m'(t)$.

\end{proof}

Therefore, in order to prove Theorem \ref{main}, the only part of \cite{bray} that needs to be modified is the part that deals with the convergence to Schwarzschild.  The basic idea here is that since $m(t)$ is nonincreasing and bounded below by zero (by Positive Mass Theorem), we might hope that its derivative, $-2\tilde{m}(t)$, converges to zero.  Indeed, that turns out to be the case (see Lemma \ref{mtildelimit}).  The equality case of the Positive Mass Theorem states that the only complete asymptotically flat manifold of nonnegative scalar curvature and zero mass is Euclidean space.  Therefore we might also hope that since $\tilde{m}(t)$ is converging to zero, $\tilde{g}_t$ must converge to the flat metric at infinity, in some sense.  In order to establish this fact, we will need a strengthened version of the equality case of the Positive Mass Theorem (see Theorem \ref{strongPMT}), proved in a separate paper \cite{strongPMT}.   Then it is not hard to see that (with the right choice of end coordinates), $g_t$ must converge to a Schwarzschild metric outside $\Sigma(t)$.\footnote{This is a refined version of the fact that the only asymptotically flat manifolds that are scalar-flat and conformal to Euclidean $\rr^{n}\smallsetminus\{0\}$ are Schwarzschild manifolds.}  

In order to make this basic argument work, we need to control $\Sigma(t)$.  (Specifically, we need Lemma \ref{rmax}.)  In \cite{bray}, this control was obtained using curvature estimates by way of the Gauss-Bonnet Theorem, together with a Harnack-type inequality from \cite{brayiga} that was only applicable in three dimensions.  It is this part of the proof that needs to be completely re-worked for application to higher dimensions.  Even though our new proof is more general, it is actually more elementary and straightforward than the original proof.  This content appears in Section \ref{rmaxproof} of this paper.

Section \ref{schwarzschild} of this paper serves as a replacement of Sections 8 through 12 in \cite{bray}, although there is a fair amount of overlap.  To summarize the differences: First, using the three-dimensional curvature estimates described above, it was proved in \cite{bray} that $\Sigma(t)$ eventually encloses any bounded region, and consequently one can then assume that $M$ is an exterior domain of $\rr^3$.  It turns out that this simplification is not actually needed for our proof, but it means that we have to be a bit more careful than in \cite{bray}.  Second, the strengthened version of the equality case of the Positive Mass Theorem (Theorem \ref{strongPMT}) mentioned above was proved in \cite{bray} using spinors.  We need a different proof here since higher dimensional manifolds need not be spin; the proof is given in \cite{strongPMT}.  Third, with the benefit of hindsight we are able to simplify and streamline many aspects of the original proof.

\section{Convergence to Schwarzschild}\label{schwarzschild}

As mentioned earlier, we want to show that $\tilde{m}(t)$ converges to zero as $t\to\infty$.
\begin{lem}\label{mtildelimit}
$$\lim_{t\to\infty} \tilde{m}(t)=0.$$
\end{lem}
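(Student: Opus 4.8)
The plan is to combine the monotonicity and lower bound for $m(t)$ with the mass-derivative identity of Lemma~\ref{massderivative}. First, $m(t)$ is nonincreasing by Lemma~\ref{monotone}, and $m(t)\geq 0$ for all $t$ by the Positive Mass Theorem (applied, say, after capping off the minimal boundary $\Sigma(t)$ to produce a complete manifold with nonnegative scalar curvature). Hence $m(t)$ converges to some limit $m_\infty\geq 0$ as $t\to\infty$. Since $m(t)$ is Lipschitz in $t$ (it is a fixed boundary integral of $g_t$, and $g_t(x)$ is Lipschitz in $t$), integrating the identity $\frac{d}{dt}m(t)=-2\tilde m(t)$ of Lemma~\ref{massderivative}, which holds off the at most countably many jump times, gives
$$m(t)=m(0)-2\int_0^t\tilde m(s)\,ds .$$
Letting $t\to\infty$ yields $\int_0^\infty\tilde m(s)\,ds=\tfrac12(m(0)-m_\infty)<\infty$. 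Because $\tilde m\geq 0$ (again by the Positive Mass Theorem, this time applied to $(\tilde M_{\Sigma(t)},\tilde g_t)$), this already forces $\liminf_{t\to\infty}\tilde m(t)=0$ and, more precisely, $\int_t^{t+1}\tilde m(s)\,ds\to 0$ as $t\to\infty$.

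It remains to promote $\liminf\tilde m(t)=0$ to $\lim\tilde m(t)=0$; in other words, to rule out that $\tilde m$ develops tall, thin spikes along a sequence $t_k\to\infty$. A finite integral of a nonnegative function does not by itself control the function pointwise, so some extra regularity of the conformal flow is needed here. I would argue this in one of two essentially equivalent ways. Option (i): show directly from the flow that $m(t)$ is convex, so that $\tilde m(t)=-\tfrac12 m'(t)$ is nonincreasing; a nonincreasing nonnegative function with finite integral necessarily tends to $0$. Option (ii): show that $\tilde m(t)$ is uniformly continuous on $[0,\infty)$---equivalently, that $m''(t)$ is bounded---by deriving evolution equations for the expansion coefficients of the Euclidean-harmonic potentials $\UU_t$ and $\VV_t$ in the region $|x|>\rharm$ and estimating them with standard elliptic/parabolic bounds; then invoke the elementary fact that a uniformly continuous, nonnegative, integrable function on $[0,\infty)$ tends to $0$.

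The main obstacle is exactly this last step, and the delicate point in carrying it out is that $\Sigma(t)$ escapes to infinity, so the needed estimates must be made uniform in $t$. This is precisely where one uses the control on the location and geometry of $\Sigma(t)$ supplied by Lemma~\ref{rmax}, and it is the part of the argument that genuinely had to be re-worked in higher dimensions, since the corresponding step in \cite{bray} relied on three-dimensional curvature estimates (via Gauss-Bonnet) together with a Harnack inequality special to $n=3$.
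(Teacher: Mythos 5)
Your first half matches the paper exactly: $m(t)$ nonincreasing and bounded below gives $\int_0^\infty\tilde m(s)\,ds=\tfrac12(m(0)-m_\infty)<\infty$, hence $\liminf_{t\to\infty}\tilde m(t)=0$. But the crucial step—ruling out spikes—is precisely where your proposal stops being a proof: you offer two unexecuted options, and neither is what actually works. Option (i), convexity of $m(t)$, is not established anywhere and is not what the flow gives you; option (ii), uniform continuity of $\tilde m$ via ``standard elliptic/parabolic bounds,'' is vague and also asks for more than is needed (a two-sided bound on $m''$), while giving no mechanism that survives $\Sigma(t)$ running off to infinity. Moreover, your diagnosis that this step requires Lemma~\ref{rmax} and is the part re-worked in higher dimensions is off: the paper's proof of Lemma~\ref{mtildelimit} uses no control whatsoever on the location of $\Sigma(t)$; Lemma~\ref{rmax} enters only later, in the convergence-to-Schwarzschild argument.

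The missing idea is a one-sided derivative bound obtained from the maximum principle and the nesting of the surfaces. Since $\Sigma(t_2)$ encloses $\Sigma(t_1)$ for $t_2>t_1$, comparison of the Dirichlet problems shows that $e^t v_t(x)$, hence $e^t\VV_t(x)$, is nondecreasing in $t$ for each fixed $x$. Reading off the $|x|^{2-n}$ coefficient of $e^t\VV_t$ in the harmonic region $|x|>\rharm$ (as in the proof of Lemma~\ref{massderivative}) shows that $e^{2t}\bigl(m(t)+m'(t)\bigr)$ is nondecreasing, so (when $m$ is smooth)
\begin{equation*}
0\le m''(t)+3m'(t)+2m(t)\le m''(t)+2m(0),
\end{equation*}
using $m'(t)=-2\tilde m(t)\le 0$. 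Hence $\tilde m'(t)=-\tfrac12 m''(t)\le m(0)$: the derivative of $\tilde m$ is bounded \emph{above}, which is exactly enough (a nonnegative function with finite integral cannot have spikes if its upward slope is bounded, since each spike of height $\epsilon$ would contribute at least $\epsilon^2/(2m(0))$ to the integral). This is the self-contained, elementary mechanism your proposal is missing; without it, or some substitute of equal strength, the passage from $\liminf=0$ to $\lim=0$ is a genuine gap.
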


\begin{proof}
\begin{claim}
The quantity $e^{2t}(m(t)+m'(t))$ is nondecreasing in $t$.
\end{claim}
Recall that for $t_2>t_1$, $\Sigma(t_2)$ encloses $\Sigma(t_1)$.  By the maximum principle and the definition of $v_t$, it is evident that $e^{t}v_t(x)$ is nondecreasing in $t$, for any fixed $x$.  Therefore $e^t\VV_t(x)$ is also nondecreasing in $t$, for any fixed $x$.  Recall from the proof of Lemma \ref{massderivative} that
\begin{equation*}
e^t\VV_t(x)=-1 +{1\over 2}e^{2t} (m(t)+m'(t))|x|^{2-n}+O(|x|^{1-n}).
\end{equation*}
The claim follows.

For now assume that $m(t)$ is smooth.
\begin{claim}
$$\tilde{m}'(t)\leq m(0).$$
\end{claim}
Differentiating the monotone quantity from the previous claim,
$$0\leq {d\over dt}\left[e^{2t}(m(t)+m'(t))\right]=e^{2t}(m''(t)+3m'(t)+2m(t)).$$
Since $m'(t)=-2\tilde{m}(t)\leq 0$, we have 
$$0\leq m''(t)+2m(t)\leq -2\tilde{m}'(t)+2m(0),$$
proving the claim.  Since $\tilde{m}(t)$ is a nonnegative function with finite integral and derivative bounded above, it follows that $\lim_{t\to\infty} \tilde{m}(t)=0$.  Of course, $m(t)$ is not necessarily smooth, but it is a simple exercise to show that the result still holds.
\end{proof}

Since $m(t)$ is nonincreasing and bounded below by zero, it must have a limit.
\begin{lem}\label{Mnotzero}
Let $M=\lim_{t\to\infty}m(t)$.  $M>0$.
\end{lem}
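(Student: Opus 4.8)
I would argue by contradiction: suppose $M = \lim_{t\to\infty} m(t) = 0$, and derive a contradiction from the fact that the horizon area stays equal to the \emph{fixed} positive constant $A$ while the mass degenerates. The vehicle is the reflected-and-capped manifold $(\tilde M_{\Sigma(t)},\tilde g_t)$ already used in the proof of Lemma~\ref{massderivative}. First I would record that for every $t$ the hypersurface $\Sigma(t)$ sits inside $(\tilde M_{\Sigma(t)},\tilde g_t)$ as an \emph{outer minimizing horizon of fixed area} $\delta := \left(\tfrac12\right)^{\frac{2(n-1)}{n-2}}A>0$. Indeed, the conformal factor $\omega_t = \tfrac12(1-\nu_t)$ equals $\tfrac12$ on $\Sigma(t)$, takes values in $[\tfrac12,1]$ on the distinguished side, and by the reflection symmetry has vanishing normal derivative along $\Sigma(t)$; hence the conformal change $\tilde g_t = \omega_t^{4/(n-2)}\bar g_t$ preserves minimality of $\Sigma(t)$, multiplies its area (the constant $A$, by the lemma on constancy of horizon area) by the constant $\left(\tfrac12\right)^{\frac{2(n-1)}{n-2}}$, and — since $\omega_t\ge\tfrac12$ on the whole distinguished side with equality only on $\Sigma(t)$ — keeps $\Sigma(t)$ outer minimizing. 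By Lemma~\ref{mtildelimit}, the masses $\tilde m(t)$ of these manifolds tend to $0$.

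Thus I would have a family of complete one-ended asymptotically flat manifolds with nonnegative scalar curvature, each carrying an outer minimizing horizon of the \emph{same} positive area $\delta$, but with $\tilde m(t)\to 0$. This is the desired contradiction: the strengthened equality case of the positive mass theorem (Theorem~\ref{strongPMT}) forces $\tilde g_t$ to be uniformly close to the Euclidean metric on regions exhausting space as $t\to\infty$, while the enclosing-area bound $|S|_{\tilde g_t}\ge\delta$ for every $S$ enclosing $\Sigma(t)$ pins the horizon (in the asymptotically flat coordinate) near the fixed radius $\left(\delta/\omega_{n-1}\right)^{1/(n-1)}$; a metric uniformly near Euclidean on a large annulus cannot contain a closed minimal hypersurface enclosing a region of that fixed size. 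One could instead dispense with the capping and argue directly on $(M,g_t)$, using $m(t)\to 0$ together with the coordinate expansion of $\tfrac12(\UU_t-\VV_t)$ from the proof of Lemma~\ref{massderivative}.

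\textbf{The main obstacle} is making that last step rigorous. The \emph{ordinary} positive mass rigidity theorem, applied at a single time $t$, only yields $\tilde m(t)>0$ (Euclidean space has no closed minimal hypersurface), which is perfectly consistent with $\tilde m(t)\to 0$; so what is genuinely needed is the \emph{quantitative} near-flatness provided by Theorem~\ref{strongPMT}, coupled with enough control on the location and size of $\Sigma(t)$ — essentially a weak form of Lemma~\ref{rmax} — to conclude that the fixed-area minimal hypersurface cannot persist in the near-flat limit. I would also stress that no argument relying solely on the properties of $m(t)$ established so far can work: the facts that $m$ is nonincreasing, that $m'(t)=-2\tilde m(t)\le 0$, and that $e^{2t}(m(t)+m'(t))$ is nondecreasing are all compatible with $m(t)$ decaying like $e^{-t}$ to zero, so it is precisely the geometry of the fixed-area horizon that must be exploited.
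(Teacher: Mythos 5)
There is a genuine gap, and it sits at the very first step of your scheme: $\Sigma(t)$ is \emph{not} a minimal hypersurface of $(\tilde{M}_{\Sigma(t)},\tilde{g}_t)$. The reflection symmetry of $(\bar{M}_{\Sigma(t)},\bar{g}_t)$ forces $\omega_t\circ\sigma=1-\omega_t$ (odd about the value $\tfrac12$), not $\omega_t\circ\sigma=\omega_t$, so the normal derivative of $\omega_t$ along $\Sigma(t)$ does not vanish; on the contrary, since $\nu_t<0$ outside $\Sigma(t)$ and $\nu_t=0$ on it, the Hopf lemma gives $\partial_\nu\omega_t=-\tfrac12\partial_\nu\nu_t>0$ strictly. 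Under $\tilde{g}_t=\omega_t^{4/(n-2)}\bar{g}_t$ the mean curvature transforms as $\tilde{H}=\omega_t^{-2/(n-2)}\bigl(H+\tfrac{2(n-1)}{n-2}\,\omega_t^{-1}\partial_\nu\omega_t\bigr)$, so $\Sigma(t)$ acquires strictly positive mean curvature in $\tilde{g}_t$: it minimizes area among enclosing surfaces (your area computation $\delta=(\tfrac12)^{2(n-1)/(n-2)}A$ and the comparison using $\omega_t\ge\tfrac12$ are fine), but it is not a horizon. The Schwarzschild case makes the failure vivid: there $(\tilde{M}_{\Sigma(t)},\tilde{g}_t)$ is flat $\rr^n$ with $\tilde{m}(t)=0$ and $\Sigma(t)$ is a round coordinate sphere. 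This also exposes a structural problem with the whole scheme: Lemma \ref{mtildelimit} gives $\tilde{m}(t)\to0$ unconditionally, and your proposed contradiction never actually uses the hypothesis $M=0$, only properties of $(\tilde{M}_{\Sigma(t)},\tilde{g}_t)$ that hold along every conformal flow; if it worked it would rule out the flow itself (e.g.\ the Schwarzschild flow), so it proves too much. A further difficulty, even granting a minimal surface: Theorem \ref{strongPMT} only controls $W_t$ in the harmonically flat region $|x|>a\rmax$, i.e.\ far outside where $\Sigma(t)$ lives, so "near-Euclidean excludes a fixed-size minimal hypersurface" is not available from the tools in the paper in the relevant region.

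The paper's proof runs instead along the lines of your closing remark ("dispense with the capping and argue directly"), but the mechanism is different and does use $M=0$ essentially. Working with the normalized flow $(M,G_t,\Sigma^*(t))$: if $M=0$, Lemma \ref{converge} gives $U_t\to1$, so a coordinate sphere $S_{R'}$ with $R'<\bigl({A\over\omega_{n-1}}\bigr)^{1/(n-1)}$ has $G_t$-area tending to $\omega_{n-1}(R')^{n-1}<A$; since $\Sigma^*(t)$ is outer minimizing with area $A$, it cannot eventually stay inside $S_{R'}$, so along a sequence of times it contains points at definite distance from the origin. The nonempty Hausdorff limit $\Sigma_\infty$ then feeds into the energy/co-area argument of Lemma \ref{lastlemma}: $V_{t_i}=0$ on $\Sigma^*(t_i)$ while $V_\infty\equiv-1$ nearby (here $M=0$ enters again), which forces the Dirichlet energy of $V_{t_i}$ to blow up, contradicting the uniform energy bound furnished by Lemma \ref{rmax}. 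So the correct exploitation of the fixed area $A$ is the comparison with coordinate spheres in the \emph{original} flow, not a horizon statement in the capped manifold; to repair your write-up you would essentially have to replace its main mechanism by this one.
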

We postpone the proof of this lemma until the next section, so as not to interrupt the flow of the main argument.

Let $r_0<{1\over2}\rsch(M)$, and choose a diffeomorphism $\rr^n\smallsetminus B_{r_0}\isom M\smallsetminus K$.  That is, we choose coordinates in $\rr^n\smallsetminus B_{r_0}$ for the end.  Recall that since we chose the normalization $\lim_{x\to\infty}\UU_0=1$, we \emph{cannot} say that $\UU_0$ is harmonic in $\rr^n\smallsetminus B_{r_0}$ without losing generality.  We can only say that $\UU_0$ is harmonic in $\rr^n\smallsetminus B_{\rharm}$ for some possibly large $\rharm$.  

We want to talk about convergence of our Riemannian manifold as $t\to\infty$, but we will see that, with respect to a \emph{fixed} coordinate system at infinity, $\Sigma(t)$ runs off to infinity.  Consequently, the part of $g_t$ that we care about (the part outside $\Sigma(t)$) disappears in the limit.  Therefore we need to change our choice of coordinates as $t$ changes.  One way to do this is to introduce a one-parameter group of diffeomorphisms.  Choose a smooth vector field $X$ on $M$ such that 
$$X={2\over n-2}r{\partial\over\partial r}$$
on $\rr^n\smallsetminus B_{r_0}$, where $r=|x|$ is the radial coordinate on $\rr^n\smallsetminus B_{r_0}$.  (We extend $X$ inside $K$ so that it is smooth.)
Let $\Phi_t$ be the one-parameter group of diffeomorphisms of $M$ generated by $X$.
\begin{defin}
Given our conformal flow $(M,g_t,\Sigma(t))$, we define the \emph{normalized conformal flow} $(M,G_t,\Sigma^*(t))$ by
$$G_t=\Phi_t^*g_t$$
$$\Sigma^*(t)=\Phi_t^{-1}(\Sigma(t)).$$
\end{defin}
Define new functions 
$$U_t(x)=e^t \UU_t(\Phi_t(x))$$
$$V_t(x)=e^t \VV_t(\Phi_t(x)),$$
and a new metric
$$(\Gflat)_t=e^{-4t\over n-2}\Phi_t^*\gflat.$$
Note that $G_t=U_t^{4\over n-2}(\Gflat)_t$.  Also note that $V_t(x)=0$ inside $\Sigma^*(t)$, and outside $\Sigma^*(t)$, $V_t$ is the unique solution to the Dirichlet problem
$$\left\{
\begin{array}{rcll}
 \Delta_{(\Gflat)_t}V_t-\left({\Delta_{(\Gflat)_t} U_0\over U_0}\right) V_t  &=&0\text{ outside }\Sigma^*(t)\\
V_t(x)&=&0\text{ at }\Sigma^*(t)\\
\lim_{x\to\infty} V_t(x)&=&-1& 
\end{array}
\right.$$
Differentiating the definition of $U_t$, we see that
$${d\over dt}U_t=V_t+U_t+XU_t.$$
For all $t>t_0={n-2\over 4}\log\left({\rharm\over r_0}\right)$ and $|x|>r_0$, we see that $((\Gflat)_t)_{ij}(x)=\delta_{ij}$ and
\begin{equation}\label{diffyqV}
{d\over dt}U_t=V_t + U_t + {2\over n-2}r{\partial\over\partial r}U_t.
\end{equation}
Since we are concerned with what happens for large $t$, from now on we will always assume that $t>t_0$.

Let $W_t={1\over 2}(U_t-V_t)$ outside of $\Sigma^*(t)$.  Now define
$$\tilde{G}_t=W_t^{4\over n-2}(\Gflat)_t$$
on the exterior of $\Sigma^*(t)$.  Observe that $\tilde{G}_t$ is isometric to $\tilde{g}_t$, and consequently it
has mass equal to $\tilde{m}(t)$. Note that for $t>t_0$ and $x$ outside $\Sigma^*(t)$ with $|x|>r_0$,
\begin{equation}\label{diffyqW}
{d\over dt}U_t=2\left[U_t-W_t + {1\over n-2}r{\partial\over\partial r}U_t\right].
\end{equation}

As mentioned earlier, in order to make this argument work, we need to obtain control on $\Sigma(t)$.  We postpone the proof of this lemma so as not to interrupt the flow of the main argument.
\begin{lem}\label{rmax}
There exists some $\rmax>0$ such that $\Sigma^*(t)$ is always enclosed by the coordinate sphere of radius $\rmax$.
\end{lem}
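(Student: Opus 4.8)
The plan is to use large coordinate spheres in the distinguished end as mean-convex barriers which the minimal hypersurfaces $\Sigma^*(t)$ cannot cross.

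First I would record the asymptotics of the normalized flow outside the fixed sphere $S_{r_0}$. For $t>t_0$ and $|x|>r_0$ we have $G_t=U_t^{4\over n-2}(\Gflat)_t$ with $(\Gflat)_t=\delta$, and $U_t$ is Euclidean-harmonic there with $\lim_{x\to\infty}U_t=1$. Transporting the expansion of $\UU_t$ from the proof of Lemma \ref{massderivative} through the scaling $\Phi_t$ gives
$$U_t(x)=1+\tfrac12 m(t)\,|x|^{2-n}+O(|x|^{1-n})\qquad(|x|\to\infty),$$
where, by Lemma \ref{Mnotzero} and monotonicity of the mass, $m(t)\in[M,m(0)]$ is bounded. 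A conformal-change computation then shows that the coordinate sphere $S_\sigma$ has positive mean curvature in $(M,G_t)$ with respect to its outward normal exactly when
$$\frac1\sigma+\frac{2}{n-2}\,\partial_r\log U_t>0$$
along $S_\sigma$, i.e.\ when $r\mapsto r\,U_t^{2/(n-2)}$ is radially increasing; by the expansion of $U_t$ this holds on all of $S_\sigma$ once $\sigma$ exceeds a fixed radius $\rmax$ comparable to $\rsch(m(0))$, provided $U_t$ and $\partial_r U_t$ are bounded on $\{|x|\geq\rmax\}$ uniformly in $t$. Since $r_0<\tfrac12\rsch(M)\leq\tfrac12\rsch(m(0))$, this $\rmax$ may be taken larger than $r_0$.

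Next comes the barrier argument. Suppose some $\Sigma^*(t)$ is not enclosed by $S_\rmax$. Since the underlying manifold has a single end, $\Sigma^*(t)$ bounds a bounded region, so the unbounded connected set $\{|x|>\rmax\}$ lies in the exterior of $\Sigma^*(t)$ unless $\Sigma^*(t)$ meets it; hence $\Sigma^*(t)$ attains a maximal coordinate radius $\sigma^\ast\geq\rmax$ at some point $p$. Then $S_{\sigma^\ast}$ is tangent to $\Sigma^*(t)$ at $p$, with $\Sigma^*(t)$ lying locally inside $\overline{B_{\sigma^\ast}}$. Since $|p|>r_0$, near $p$ the metric $G_t$ has the form above and $S_{\sigma^\ast}$ is strictly mean-convex, while $\Sigma^*(t)$ is minimal in $(M,G_t)$; comparing the two smooth hypersurfaces as graphs over $T_pS_{\sigma^\ast}$ and invoking the maximum principle for the minimal surface equation (a minimal hypersurface cannot touch a strictly mean-convex barrier from the inside) yields a contradiction. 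Hence $\Sigma^*(t)$ is enclosed by $S_\rmax$ for all $t$ larger than some $t_1\geq t_0$.

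Finally, for $t\in[0,t_1]$ one argues by compactness: the $\Sigma(t)$ are nested and all lie in the fixed compact region bounded by $\Sigma(t_1)$, so the sets $\Sigma^*(t)=\Phi_t^{-1}(\Sigma(t))$ with $t\in[0,t_1]$ all lie in a fixed bounded set; enlarging $\rmax$ if necessary gives the lemma for all $t\geq0$. I expect the main obstacle to be precisely the uniform-in-$t$ estimate used to pin down $\rmax$: one needs to control the geometry of $(M,G_t)$ outside the fixed sphere $S_{r_0}$---equivalently, bound $U_t$ and $\partial_r U_t$ on $\{|x|\geq\rmax\}$ independently of $t$ as $t\to\infty$. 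Because this asymptotic region itself depends on the (not yet known) location of $\Sigma^*(t)$, the estimate and the enclosure statement have to be established together, e.g.\ by a continuity argument in $t$ or by analyzing the first time $\Sigma^*(t)$ could reach $S_\rmax$; it is here that the bound $M\leq m(t)\leq m(0)$ and the precise form of the normalization $\Phi_t$ are essential.
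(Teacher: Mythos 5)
Your barrier strategy founders on exactly the point you flag and then defer: the uniform-in-$t$ control of $U_t$ and $\partial_r U_t$ at the would-be touching sphere, and this is not a technicality that a continuity or first-crossing argument can patch. Note first that for $t>t_0$ and $|x|>r_0$ the function $U_t$ is Euclidean-harmonic only \emph{outside} $\Sigma^*(t)$, not on all of $\{|x|>r_0\}$ as you state; the expansion $U_t=1+\tfrac12 m(t)|x|^{2-n}+O(|x|^{1-n})$ is an expansion at infinity whose error is governed by the inner boundary of the region of harmonicity, i.e.\ by the unknown location of $\Sigma^*(t)$ itself. If $\Sigma^*(t)$ reached out to coordinate radius $\sigma^*\gg\rsch(m(0))$, the expansion would be reliable only for $|x|\gg\sigma^*$, while the tangency point lies \emph{on} $\Sigma^*(t)$, at the very edge of the harmonic region, where no interior gradient estimate for $U_t$ is available; so strict mean-convexity of $S_{\sigma^*}$ there cannot be asserted. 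The input your maximum-principle comparison needs (a bound on $\partial_r U_t$ at scale $\sigma^*$, uniform in $t$) is essentially equivalent to the conclusion of the lemma, and a continuity-in-$t$ scheme is further complicated by the fact that $\Sigma(t)$ can jump outward at countably many times. A smaller logical point: you invoke Lemma \ref{Mnotzero} to place $m(t)\in[M,m(0)]$ and to choose $r_0$, but in the paper's architecture Lemma \ref{Mnotzero} is proved \emph{using} Lemma \ref{rmax} (its energy bound is $(n-2)\omega_{n-1}\rmax^{n-2}$), so inside this proof you may only use the upper bound $m(t)\le m(0)$ coming from Lemma \ref{monotone}.

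The paper's proof is designed precisely to avoid any gradient estimate. It establishes the equivalent unnormalized statement (Lemma \ref{bounded}) by a bootstrap over unit time intervals: assuming $\Sigma(s)$ is enclosed by $S_{R(s)}$ for $s\le T$, the maximum principle applied to $\VV_s$ (a zeroth-order comparison with the explicit harmonic function vanishing on $S_{R(s)}$) integrates along the flow to give two-sided bounds $\tfrac1C e^{-(T+1)}\le \UU_{T+1}\le (2+C)e^{-T}$ outside $S_{R(T)}$. If $\Sigma(T+1)$ then had a point beyond radius $R(T+1)$, these $C^0$ bounds on the conformal factor make $\Sigma(T+1)$ a $\gamma$-almost Euclidean-area-minimizer in a ball of radius $R(T)/3$ about that point, and the density lower bound of Lemma \ref{gamma} forces $|\Sigma(T+1)|_{g_{T+1}}$ to exceed a fixed multiple of $\rmax^{n-1}$, contradicting the conservation of the horizon area $A$ once $\rmax$ is chosen large. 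It is this use of the conserved area together with the almost-minimizing density estimate, rather than a pointwise mean-curvature comparison, that lets purely zeroth-order bounds suffice and breaks the circularity your approach runs into; to salvage a barrier argument you would have to produce gradient bounds on $U_t$ near $\Sigma^*(t)$ first, which I do not see how to do without already knowing the lemma.
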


As mentioned earlier, we will need to use a strengthened version of the equality case of the Positive Mass Theorem.  Essentially, we want to say that a sequence of asymptotically flat manifolds of nonnegative scalar curvature becomes flatter as the mass approaches zero.  The proof of this theorem is the subject of a separate paper \cite{strongPMT}.

\begin{thm}\label{strongPMT}
Let $M^n$ be any smooth manifold on which the Positive Mass Theorem holds.  Let $\rr^n\smallsetminus B_{R}(0)$ be a coordinate chart for one of the ends of $M$.  Let $G$ be a complete asymptotically flat metric of nonnegative scalar curvature on $M$, and suppose that 
$$G_{ij}(x)=W(x)^{4\over n-2}\delta_{ij}$$
in  $\rr^n\smallsetminus B_{R}(0)$, where
$W$ is a positive (Euclidean) harmonic function on $\rr^n\smallsetminus B_{R}(0)$ with $\lim_{x\to\infty} W(x)=1$.  

Then for any $\epsilon>0$, there exists $\delta>0$ such that if the mass of $(M,G)$ in this end is less then $\delta R^{n-2}$, then 
$$\sup_{|x|>aR}|W(x)-1|<\epsilon,$$ 
where $a$ is a universal constant depending only on $n$.  The constant $\delta$ depends only on $\epsilon$ and $n$.  In particular, it does not depend on the topology of $M$.
\end{thm}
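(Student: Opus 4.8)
First I would reduce to the case $R=1$ by the rescaling $x\mapsto x/R$: this replaces $W(x)$ by $W(Rx)$, again a positive Euclidean harmonic function on $\rr^n\smallsetminus B_1(0)$ tending to $1$, and replaces the mass by $R^{2-n}$ times the original; so the hypothesis becomes $m<\delta$ and the conclusion becomes $\sup_{|x|>a}|W-1|<\epsilon$ (we will take $a=3$). Expanding the bounded harmonic function $W$ in spherical harmonics, $W(x)=1+c|x|^{2-n}+h(x)$, where $h(x)=O(|x|^{1-n})$ collects the terms of positive degree, and the lemma computing the mass as $2ab$ gives $c=m/2\geq0$. The monopole term $c|x|^{2-n}$ is then positive, decreasing in $|x|$, and bounded by $m/2$; and $h$ is harmonic on $\{|x|>1\}$ with $h\to0$, so by the maximum principle $\sup_{|x|\geq a}|h|=\sup_{S_a}|h|$. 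Combining these, $\sup_{|x|\geq a}|W-1|\leq\sup_{S_a}|W-1|+m\,a^{2-n}$, so it suffices to control $W$ on the single sphere $S_a$.

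Suppose the theorem fails for some $\epsilon_0>0$. Then there is a sequence $(M_i,G_i)$ of such manifolds with conformal factors $W_i$ and masses $m_i\to0$, yet, by the previous paragraph, $\sup_{S_3}|W_i-1|\geq\epsilon_0/2$ for all large $i$. The average of $W_i$ over $S_r$ is harmonic in the radial variable and tends to $1$, hence equals $1+\tfrac{m_i}{2}r^{2-n}$, so it converges to $1$ uniformly on $\{|x|\geq1\}$. Applying the Harnack inequality to the positive harmonic function $W_i$ on the annulus $\{1<|x|<3\}$ and then the maximum principle on $\{|x|\geq2\}$, one obtains constants $0<\lambda\leq\Lambda$ depending only on $n$ with $\lambda\leq W_i\leq\Lambda$ on $\{|x|\geq2\}$ for all large $i$. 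Interior elliptic estimates for harmonic functions then give uniform $C^k$ bounds on compact subsets of $\{|x|>2\}$, so after passing to a subsequence $W_i\to W_\infty$ in $C^\infty_{\mathrm{loc}}(\{|x|>2\})$, with $W_\infty$ positive and harmonic. Passing the spherical averages to the limit shows the average of $W_\infty$ over each $S_r$ is identically $1$, so $W_\infty$ has vanishing monopole term, while $\sup_{S_3}|W_\infty-1|\geq\epsilon_0/2$, so $W_\infty\not\equiv1$.

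To derive a contradiction I would invoke the rigidity part of the Positive Mass Theorem. The exterior regions $(\{|x|\geq2\},W_i^{4/(n-2)}\delta)$ have fixed topology and converge smoothly on compact sets to $(\Omega_\infty,G_\infty):=(\{|x|\geq2\},W_\infty^{4/(n-2)}\delta)$, a complete asymptotically flat manifold with one end and one compact boundary component, with identically zero scalar curvature and zero mass (the monopole term of $W_\infty$ vanishes). Since $\partial\Omega_\infty=S_2$ carries a metric close to the round one with mean curvature close to that of a round coordinate sphere, one can glue onto $\partial\Omega_\infty$ a compact mean-convex cap of nonnegative scalar curvature, arranging the mean curvatures to satisfy the inequality required for a Miao-type smoothing across the seam, and thereby produce a smooth complete asymptotically flat manifold $(\hat M,\hat G)$ of nonnegative scalar curvature with the same end as $\Omega_\infty$, hence of zero mass (and simple enough that the Positive Mass Theorem applies to it). By the equality case of the Positive Mass Theorem, $(\hat M,\hat G)$ is flat, so $W_\infty^{4/(n-2)}\delta$ is flat near infinity and, being conformal to the Euclidean metric with conformal factor tending to $1$, this forces $W_\infty\equiv1$ — contradicting $W_\infty\not\equiv1$. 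The constants: $a=3$ works, and $\delta$ is produced by the contradiction argument and depends only on $\epsilon$ and $n$; no step refers to the topology of $M$.

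I expect the final step to be the main obstacle: constructing the cap and carrying out the smoothing so that nonnegative scalar curvature is preserved, and doing so in a way that is entirely insensitive to the uncontrolled regions $\{|x|<2\}\subset M_i$ — whose geometry and topology vary wildly along the sequence, which is precisely why $\delta$ must be independent of the topology. This is the content isolated in the companion paper \cite{strongPMT}; the compactness argument above reduces Theorem~\ref{strongPMT} to it.
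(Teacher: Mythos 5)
This paper does not prove Theorem \ref{strongPMT} at all; it is quoted from the companion paper \cite{strongPMT}, so the relevant question is whether your argument would actually establish it. Your reduction steps are fine: the rescaling to $R=1$, the decomposition $W=1+\tfrac{m}{2}|x|^{2-n}+h$, the use of spherical averages and the Harnack inequality to get uniform two-sided bounds on $\{|x|\geq 2\}$, and the extraction of a limit $W_\infty$ with vanishing monopole term are all sound. But the crux of your proof --- ruling out $W_\infty\not\equiv 1$ --- has a genuine gap. A positive harmonic function on $\{|x|>2\}$ tending to $1$ with zero monopole term need not be constant (e.g.\ $1+$ a small dipole term), so everything hinges on importing geometric information through the boundary, and that is exactly where your argument breaks: the limit only exists on the exterior region, the interiors of the $M_i$ are completely uncontrolled, and your proposed fix is to cap off $\partial\Omega_\infty$ and invoke PMT rigidity on the capped manifold. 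The justification you offer for the cap --- that the boundary metric is ``close to the round one with mean curvature close to that of a round coordinate sphere'' --- is false in your own setup: within the contradiction hypothesis $W_\infty$ is only pinched between $\lambda$ and $\Lambda$ and is assumed \emph{not} close to $1$, so neither the induced metric on the boundary sphere nor its mean curvature (which involves $\partial_r W_\infty$) is close to the Euclidean model. What you actually need is a fill-in of a boundary metric merely uniformly equivalent to the round one, with nonnegative scalar curvature and mean curvature at least a prescribed function, satisfying the inequality required for Miao-type smoothing; such fill-in problems are delicate and subject to known obstructions (Shi--Tam-type bounds limit how large the boundary mean curvature of a nonnegative-scalar fill-in can be), and nothing in your argument produces one. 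There is also a secondary issue that even granting the cap, you need the \emph{rigidity} case of the PMT across a corner, not just nonnegativity of mass, which requires the refined treatment alluded to in \cite{miao}.

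Your closing claim that ``the compactness argument above reduces Theorem~\ref{strongPMT} to'' the content of \cite{strongPMT} is therefore not accurate: the companion paper does not prove a cap/fill-in statement of the kind your argument needs. Its proof avoids compactness and gluing altogether, precisely because the hypotheses give no control whatsoever over the region inside $B_R(0)$; instead it exploits the harmonically flat structure of the end and applies the Positive Mass Theorem directly on the \emph{original} manifold $M$ (where it is assumed to hold) with a suitably modified conformal factor, which is also why $\delta$ can be taken independent of the topology. As it stands, your proposal establishes the soft reductions but leaves the essential step unproven, and the heuristic offered for that step does not survive scrutiny.
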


Now observe that $(\tilde{G}_t)_{ij}(x)=W_t(x)^{4\over n-2}\delta_{ij}$ in $\rr^n\smallsetminus B_{\rmax}$, $W_t$ is harmonic on $\rr^n\smallsetminus B_{\rmax}$, and $\lim_{t\to\infty}\tilde{m}(t)=0$.  Also observe that since $\tilde{G}_t$ is isometric to $\tilde{g}_t$, it is a limit of metrics that extend to complete metrics of nonnegative scalar curvature \cite{miao}.  In short, we may apply the previous theorem to $\tilde{G}_t$ in order to conclude that $\lim_{t\to\infty} W_t(x)=1$ uniformly for $|x|>a\rmax$.

\begin{lem}\label{converge}
The following limits hold uniformly over all $|x|\geq 2a\rmax$.
\begin{eqnarray*}
\lim_{t\to\infty} U_t(x)&=&1+{M\over2}|x|^{2-n}\\
\lim_{t\to\infty} V_t(x)&=&-1+{M\over2}|x|^{2-n}\\
\lim_{t\to\infty} W_t(x)&=&1.
\end{eqnarray*}
\end{lem}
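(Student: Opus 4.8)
The plan is to establish the three limits together, exploiting the strong PMT input that $W_t(x)\to 1$ uniformly for $|x|>a\rmax$, together with the evolution equation \eqref{diffyqW} for $U_t$ and the relation $V_t=2W_t-U_t$ (so $V_t=U_t-2W_t$ outside $\Sigma^*(t)$). First I would note that for $|x|>\rmax$ the function $U_t$ is positive and Euclidean-harmonic, hence expands as $U_t(x)=A_t+B_t|x|^{2-n}+O(|x|^{1-n})$, and similarly $V_t(x)=C_t+D_t|x|^{2-n}+O(|x|^{1-n})$ with $C_t=\lim_{x\to\infty}V_t(x)=-1$. The relation $W_t=\tfrac12(U_t-V_t)$ then forces $\tfrac12(A_t-C_t)=\tfrac12(A_t+1)\to 1$, i.e. $A_t\to 1$; and since the mass of $\tilde G_t$ is $\tilde m(t)=\tfrac12(A_t-C_t)\cdot(\text{leading correction})$ — more precisely the leading coefficient of $W_t$ past the constant is $\tfrac12(B_t-D_t)$, which equals $(n-1)\omega_{n-1}^{-1}$ times $\tilde m(t)$ up to the normalization constant — and $\tilde m(t)\to 0$, we get $B_t-D_t\to 0$.

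Next I would pin down $B_t$ itself. Recalling from the proof of Lemma~\ref{massderivative} (transported to the normalized flow) that the $|x|^{2-n}$ coefficient of $U_t$ records the mass $m(t)$: one has $\UU_t(x)=e^{-t}+\tfrac12 e^t m(t)|x|^{2-n}+\cdots$, and applying $U_t(x)=e^t\UU_t(\Phi_t(x))$ together with $\Phi_t$ acting on the radial coordinate by $|x|\mapsto e^{2t/(n-2)}|x|$, the factors $e^{\pm t}$ cancel against the scaling weight of $|x|^{2-n}$, giving $U_t(x)=1+\tfrac12 m(t)|x|^{2-n}+\cdots$ once $A_t$ is renormalized; more carefully one tracks that $B_t\to \tfrac12 M$ because $m(t)\to M$ and $A_t\to 1$. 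Then $V_t=U_t-2W_t$ gives $C_t=A_t-2\to -1$ (consistent) and $D_t=B_t-2\cdot 0=B_t\to\tfrac12 M$, so $V_t(x)\to -1+\tfrac M2|x|^{2-n}$. To upgrade these coefficient limits to the claimed uniform convergence of the functions themselves on $|x|\ge 2a\rmax$, I would use that $U_t$, $V_t$ are positive harmonic functions on the exterior of a ball of fixed radius $a\rmax$ with controlled boundary behavior (from the uniform convergence of $W_t$ to $1$ there and a uniform upper bound on $U_t$ coming from the monotone quantity $e^t\VV_t$ and hence on $V_t$), apply a normal-families / Harnack argument to extract locally uniform limits of any subsequence, identify the limit via the coefficient computation above and the maximum principle, and conclude by uniqueness of the limit. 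The $O(|x|^{1-n})$ tails are controlled uniformly on $|x|\ge 2a\rmax$ by standard interior estimates for harmonic functions once the functions are uniformly bounded.

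The main obstacle I expect is the last step: turning "$\tilde m(t)\to 0$ and $W_t\to 1$ near the boundary sphere $S_{a\rmax}$" into genuine uniform control of $U_t$ and $V_t$ out to infinity, uniformly in $t$. This requires (i) a uniform-in-$t$ upper and lower bound on $U_t$ on a fixed coordinate annulus so that the normal-families argument applies — the lower bound is the delicate point, since a priori $U_t$ could degenerate, but it is salvaged from $\lim_{x\to\infty}U_t=A_t\to 1$ and harmonicity plus the Harnack inequality on $\rr^n\smallsetminus B_{a\rmax}$; and (ii) making sure the convergence is uniform all the way to spatial infinity, which follows because after subtracting the explicit limit $1+\tfrac M2|x|^{2-n}$ the difference is harmonic, decays at infinity, and is small on the inner sphere, so the maximum principle propagates the smallness outward. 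Everything else is bookkeeping with the expansion coefficients and the diffeomorphism $\Phi_t$.
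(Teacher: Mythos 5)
There is a genuine gap at the step where you ``upgrade these coefficient limits to the claimed uniform convergence.''  What you actually establish is convergence of the constant and $|x|^{2-n}$ coefficients of the exterior expansions of $U_t$, $V_t$, $W_t$ (indeed $A_t=1$ and $B_t=m(t)/2$ hold exactly, by the computation in the proof of Lemma \ref{massderivative} transported by $\Phi_t$).  But the lemma asserts uniform convergence of the \emph{functions} on the fixed region $|x|\geq 2a\rmax$, where the higher spherical harmonics of $U_t$ are by no means negligible: a harmonic function on an exterior domain is not determined by its first two expansion coefficients, and a priori $U_t$ could carry, say, a dipole term $c_t\,x_1|x|^{-n}$ of fixed size whose coefficient does not decay as $t\to\infty$.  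Your proposed identification of the subsequential limit ``via the coefficient computation and the maximum principle'' cannot rule this out, because the maximum principle on $\{|x|\geq 2a\rmax\}$ needs smallness of $U_t$ minus the model profile on the inner sphere $S_{2a\rmax}$, and no ingredient in your outline supplies that: Theorem \ref{strongPMT} controls the full function $W_t=\tfrac12(U_t-V_t)$ near $S_{a\rmax}$, i.e.\ only the \emph{difference} $U_t-V_t$, while $U_t$ and $V_t$ individually are merely uniformly bounded there (also note $V_t\leq 0$, not positive, so Harnack does not apply to it directly).  Thus any subsequential limit $U_\infty$ you extract is pinned down only up to an arbitrary $O(|x|^{1-n})$ harmonic tail, and ``uniqueness of the limit'' does not follow.

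This is exactly the point where the paper's proof uses the evolution equation, which you mention in your first sentence but never deploy.  The paper subtracts the model profiles, setting $\bar U_t=U_t-\left(1+\tfrac{m(t)}2|x|^{2-n}\right)$ and $\bar W_t=W_t-\left(1+\tfrac{\tilde m(t)}2|x|^{2-n}\right)$; because $m'(t)=-2\tilde m(t)$, these satisfy the same linear equation (\ref{barredeqn}) derived from (\ref{diffyqW}).  Theorem \ref{strongPMT} makes $\bar W_t$ small on $S_{a\rmax}$, and since $\bar W_t$ is harmonic with no constant or $|x|^{2-n}$ part, the maximum principle and a gradient estimate give $|\bar W_t|<C\epsilon|x|^{1-n}$ for $|x|>2a\rmax$.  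The decisive step is then dynamical: the homogeneous part of (\ref{barredeqn}), $\partial_t\bar U=2\bar U+\tfrac2{n-2}r\partial_r\bar U$, contracts data decaying like $|x|^{1-n}$ (the net exponent is $2+\tfrac{2(1-n)}{n-2}=-\tfrac2{n-2}<0$), so integrating in $t$ with the small forcing $-2\bar W_t$ yields $|\bar U_t|<3C\epsilon|x|^{1-n}$ for large $t$; this is what kills the higher-harmonic content of $U_t$ and gives the first limit, with the second following from $V_t=U_t-2W_t$.  Without this (or some substitute mechanism transferring the strong-PMT control of $W_t$ to $U_t$), your normal-families argument cannot identify the limit, so the proposal as written does not close.  (Minor further slips: the $|x|^{2-n}$ coefficient of $W_t$ is $\tilde m(t)/2$ by the paper's mass formula $m=2ab$, not $(n-1)\omega_{n-1}^{-1}\tilde m(t)$.)
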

\begin{proof}
Let
$$\bar{U}_t(x)=U_t(x)-\left(1+{m(t)\over2}|x|^{2-n}\right)$$
and
$$\bar{W}_t(x)=W_t(x)-\left(1+{\tilde{m}(t)\over2}|x|^{2-n}\right).$$
Therefore, by equation (\ref{diffyqW}) and Lemma \ref{massderivative}, we know that for $x\in\rr^n\smallsetminus B_{r_0}$ and outside $\Sigma^*(t)$,
\begin{equation}\label{barredeqn}
{d\over dt}\bar{U}_t=2\left[\bar{U}_t-\bar{W}_t + {1\over n-2}r{\partial\over\partial r}\bar{U}_t\right].
\end{equation}

Let $\epsilon>0$.  By the previous theorem and the following discussion, the third equality in the 
statement of the lemma follows immediately.  In other words, we know that for large enough $t$, 
$\tilde{m}(t)$ is small enough so that
$$\sup_{x\in S_{a\rmax}} |W_t(x)-1|<\epsilon.$$
So
\begin{equation*}
\sup_{x\in S_{a\rmax}} |\bar{W}_t(x)|  <  \epsilon+ {\tilde{m}(t)\over 2}(a\rmax)^{2-n}
< 2\epsilon
\end{equation*}
for large enough $t$.  
Since $\bar{W}_t$ is harmonic ans has no constant or $|x|^{2-n}$ terms in its expansion, it follows from 
the maximum principle 
and a gradient estimate that for all $|x|>2a\rmax$,
$$ |\bar{W}_t(x)| < C\epsilon|x|^{1-n},$$
for some constant $C$ independent of $\epsilon$.  Analyzing equation (\ref{barredeqn}), we conclude that 
for large enough $t$,
$$ |\bar{U}_t(x)| < 3C\epsilon|x|^{1-n}.$$
The first equation in the statement of the lemma now follows from the definition of  $\bar{U}_t$ and the 
fact that \hbox{$\lim_{t\to\infty}m(t)=M$}. (The second equation in the statement of the lemma follows 
from the other two.)
\end{proof}

\begin{lem}\label{lastlemma}
For $X\subset \rr^n$, $\epsilon>0$, let $(X)_\epsilon$ denote the $\epsilon$-neighborhood of $X$, that is, the set of points that are distance less than $\epsilon$ away from $X$.  For all $\epsilon>0$, there exists some large~$t$ such that 
$$\Sigma^*(t)\subset (S_{\rsch(M)})_\epsilon,$$
where $S_{\rsch(M)}$ is the sphere of radius $\rsch(M)=\left({M\over 2}\right)^{1\over n-2}$ in $\rr^n$.\footnote{Actually, it is only necessary to show that $\Sigma^*(t)$ lies within the sphere of radius $\rsch(M)+\epsilon$.}
\end{lem}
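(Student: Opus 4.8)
The plan is to argue by contradiction, extract a limiting configuration, and use the strong minimum principle for superharmonic functions to show that the limit is a Schwarzschild exterior, whose only compact minimal hypersurface is the horizon. So suppose the conclusion fails: there are $\epsilon>0$ and $t_j\to\infty$ with $\Sigma^*(t_j)\not\subset B_{\rsch(M)+\epsilon}$, and by Lemma \ref{rmax} we may choose $p_j\in\Sigma^*(t_j)$ with $\rsch(M)+\epsilon\le|p_j|\le\rmax$ and, after passing to a subsequence, $p_j\to p_\infty$ with $\rsch(M)+\epsilon\le|p_\infty|\le\rmax$. Write $\Omega^*(t)$ for the bounded region enclosed by $\Sigma^*(t)$ in the fixed end coordinates, so $\overline{B_{r_0}}\subset\Omega^*(t)$, $\partial\Omega^*(t)=\Sigma^*(t)$, and $\Omega^*(t)\subset\overline{B_{\rmax}}$ (for $t$ large, $\Sigma^*(t)$ also avoids $\overline{B_{r_0}}$). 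For all large $j$ we have $G_{t_j}=U_{t_j}^{4/(n-2)}\delta$ on $\{|x|>r_0\}$; since $G_{t_j}$ has nonnegative scalar curvature, $U_{t_j}$ is positive and superharmonic on $\{|x|>r_0\}$, and since $\UU_{t_j}$ is Euclidean-harmonic outside $\Sigma(t_j)$ for $|x|>\rharm$ and the normalization scales that region into $\{|x|>r_0\}$, in fact $U_{t_j}$ is Euclidean-harmonic on $\rr^n\smallsetminus\overline{\Omega^*(t_j)}$. Finally $|\Sigma^*(t_j)|_{G_{t_j}}=A$, and by Lemma \ref{converge}, $U_{t_j}\to h:=1+\tfrac M2|x|^{2-n}$ uniformly on $\{|x|\ge 2a\rmax\}$.

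\medskip
The main analytic step, which I expect to be the principal obstacle, is to obtain a priori local bounds on the conformal factors on $\{|x|>r_0\}$: using the superharmonicity of $U_{t_j}$, the boundary values of $U_{t_j}$ on $S_{2a\rmax}$ controlled by Lemma \ref{converge}, the fixed total area $\int_{\Sigma^*(t_j)}U_{t_j}^{2(n-1)/(n-2)}\,d\mu=A$, and the minimal-surface identity $H^\delta_{\Sigma^*(t_j)}=-\tfrac{2(n-1)}{n-2}\,U_{t_j}^{-1}\partial_\nu U_{t_j}$ along $\Sigma^*(t_j)$, one must show that $U_{t_j}$ is bounded above and below by positive constants on every compact subset of $\{|x|>r_0\}$ --- the delicate point being uniform control near $\Sigma^*(t_j)$, which lies in the a priori uncontrolled region $\{r_0<|x|<2a\rmax\}$. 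Granting this, $\{\Sigma^*(t_j)\}$ is a sequence of minimal hypersurfaces of fixed area $A$, contained in the fixed ball $B_{\rmax}$, for ambient metrics with uniformly controlled local geometry; so, using $n<8$ for regularity, a subsequence converges to a smooth compact embedded minimal hypersurface $\Sigma_\infty$ (for the limit metric) through $p_\infty$, and, along a further subsequence, $\Omega^*(t_j)\to\Omega_\infty$ and $U_{t_j}\to U_\infty$ locally uniformly on $\{|x|>r_0\}$ (smoothly on $\rr^n\smallsetminus\overline{\Omega_\infty}$), with $\bar g_\infty:=U_\infty^{4/(n-2)}\delta$ and $\overline{B_{r_0}}\subset\Omega_\infty\subset\overline{B_{\rmax}}$.

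\medskip
Finally I identify the limit. The function $h$ is harmonic on $\{|x|>r_0\}$ and $U_\infty$ is superharmonic there (a locally uniform limit of superharmonic functions); by Lemma \ref{converge} and harmonicity $U_\infty\equiv h$ on the nonempty open set $\rr^n\smallsetminus\overline{\Omega_\infty}\supset\{|x|>\rmax\}$, hence $U_\infty=h$ on $\partial\Omega_\infty$ by continuity, so $U_\infty-h$ is superharmonic on $\Omega_\infty$ with zero boundary values and therefore $U_\infty-h\ge 0$ on all of $\{|x|>r_0\}$. A nonnegative superharmonic function on the connected set $\{|x|>r_0\}$ vanishing on a nonempty open subset vanishes identically, so $U_\infty\equiv h$; that is, $\bar g_\infty$ equals the Schwarzschild metric of mass $M$ on all of $\{|x|>r_0\}$. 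Then $\Sigma_\infty$ is a compact embedded minimal hypersurface of this Schwarzschild manifold lying in $\{|x|>r_0\}\supset\{|x|=\rsch(M)\}$, hence equals the coordinate sphere $S_{\rsch(M)}$: if $\rho_\infty>\rsch(M)$ were the largest Euclidean norm attained on $\Sigma_\infty$, then at such a point $\Sigma_\infty$ would be tangent from inside to $S_{\rho_\infty}$, which is strictly mean-convex in this metric, and the strong maximum principle would force $\Sigma_\infty$ to agree with $S_{\rho_\infty}$ nearby, contradicting minimality; so $\Sigma_\infty\subset\overline{B_{\rsch(M)}}$, contradicting $p_\infty\in\Sigma_\infty$ with $|p_\infty|\ge\rsch(M)+\epsilon$. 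Hence for every $\epsilon>0$, $\Sigma^*(t)\subset B_{\rsch(M)+\epsilon}$ for all sufficiently large $t$, which by the footnote is all that is needed.
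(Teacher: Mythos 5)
Your overall strategy is genuinely different from the paper's. The paper never extracts a smooth limiting minimal hypersurface: it only gets a uniform Euclidean area bound for $\Sigma^*(t)\cap(\rr^n\smallsetminus B_{r_0})$ from the constancy of $|\Sigma^*(t)|_{G_t}$ and the lower bound on $U_t$, upgrades weak convergence to Hausdorff convergence via the almost-area-minimizing density estimate (Lemma \ref{gamma}), and then rules out any part of $\Sigma_\infty$ lying outside $S_{\rsch(M)}$ by a purely analytic energy argument: $V_{t_i}$ vanishes on $\Sigma^*(t_i)$ while its limit $V_\infty=-1+\frac{M}{2}|x|^{2-n}$ is strictly negative there, so by the co-area formula and H\"older inequality the Dirichlet energy of $V_{t_i}$ would blow up, contradicting the uniform bound $(n-2)\omega_{n-1}\rmax^{n-2}$ coming from energy comparison with the harmonic function vanishing on $S_{\rmax}$. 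Your route instead runs through smooth compactness of the horizons and a mean-curvature comparison in a limiting Schwarzschild exterior, and this is where the genuine gaps are. The step you flag and ``grant'' (uniform two-sided bounds on $U_t$ on $\{|x|>r_0\}$) is the least of them --- the paper gets it from maximum-principle computations as in the proof of Lemma \ref{bounded}. The serious gap is the assertion that fixed area $A$, confinement in $B_{\rmax}$, and ``uniformly controlled local geometry'' give subsequential convergence of $\Sigma^*(t_j)$ to a \emph{smooth} compact embedded minimal hypersurface through $p_\infty$, with minimality and the touching-point comparison surviving the limit. Two-sided $C^0$ bounds on the conformal factor are far from uniform local geometric control: you would need higher-order estimates on $U_{t_j}$ up to $\Sigma^*(t_j)$ (harmonicity gives interior estimates only on the exterior side, degenerating exactly at the surface, and $G_{t_j}$ is merely $C^1$ across it), together with Schoen--Simon-type regularity/curvature estimates for the outer-minimizing boundaries, to justify smooth convergence and the classification of $\Sigma_\infty$ by the strong maximum principle. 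None of this is routine, and avoiding it is precisely the point of the paper's energy argument.

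There is also an outright error in your identification of the limit. The claim that $U_\infty\equiv h:=1+\frac{M}{2}|x|^{2-n}$ on \emph{all} of $\{|x|>r_0\}$ is false: inside $\Sigma^*(t)$ the metric is frozen at earlier times and does not converge to Schwarzschild. Already for exact Schwarzschild initial data one computes that the normalized conformal factor converges inside the horizon to $2\left(\rsch(M)/|x|\right)^{(n-2)/2}$, which is strictly less than $h$ for $r_0<|x|<\rsch(M)$. Correspondingly, your minimum-principle step breaks down: the relevant domain is $\Omega_\infty\cap\{|x|>r_0\}$, whose boundary contains the inner sphere $S_{r_0}$ where you have no sign information on $U_\infty-h$, so you cannot conclude $U_\infty-h\ge0$, let alone apply the strong minimum principle. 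Fortunately your subsequent comparison only needs the limit metric to be Schwarzschild on $\rr^n\smallsetminus\overline{\Omega_\infty}$, and that does follow from Lemma \ref{converge} together with unique continuation for harmonic functions on the connected exterior; but as written the proof routes through a false statement, and even after repairing it the argument still rests on the unproved smooth compactness above. Finally, note that to get the full statement $\Sigma^*(t)\subset(S_{\rsch(M)})_\epsilon$ the paper also rules out parts of $\Sigma_\infty$ \emph{inside} $S_{\rsch(M)}$ (using $V_\infty>0$ there), though, as you observe, the footnote says only the outer containment is needed for the main theorem.
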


\begin{proof}
Using maximum principle arguments and Lemma \ref{rmax}, one can prove uniform upper and lower bounds on $U_t(x)$ on $\rr^n\smallsetminus B_{r_0}$.

Since the area of $\Sigma^*(t)$ with respect to $G_t$ is constant, and since there is a uniform lower 
bound on $U_t(x)$, we have a uniform upper bound on the Euclidean area of $\Sigma^*(t)\cap 
(\rr^n\smallsetminus B_{r_0})$.  Therefore we can show that for some sequence $t_i$, the part of 
$\Sigma^*(t_i)$ in $\rr^n\smallsetminus B_{r_0}$ weakly converges to some $\Sigma_\infty$.
But moreover, using the uniform bounds on $U_t(x)$, one can argue the stronger statement that 
$\Sigma^*(t_i)$ converges to $\Sigma_\infty$ in Hausdorff distance. (See \cite[Section 12 and Appendix 
E]{bray} for details.  One can also argue directly using Lemma \ref{gamma}.)

Since the $V_t$'s are harmonic and uniformly bounded, we can choose a subsequence such that $V_{t_i}$ converges uniformly on compact subsets of the exterior of $\Sigma_\infty$ in $\rr^n\smallsetminus B_{r_0}$.  Since the limit must be a harmonic function, it follows from the previous lemma that the limit is $V_\infty(x)=-1+{M\over2}|x|^{2-n}$.  More precisely, given $\epsilon>0$, for large enough $i$, we know that $\Sigma_{t_i}\cap (\rr^n\smallsetminus B_{r_0}(0))\subset (\Sigma_\infty)_\epsilon$ and that $|V_{t_i}(x)-V_\infty(x)|<\epsilon$ for all $|x|>r_0$ outside $(\Sigma_\infty)_\epsilon$.  Our goal is to show that $\Sigma_\infty$ is just the sphere of radius $\rsch(M)$, and then the result will follow from the Hausdorff convergence.

Suppose that part of $\Sigma_\infty$ lies inside the sphere of radius $\rsch(M)$.  Then we can find some $t_i$ and some point $x_0$ such that $x_0$ is outside $\Sigma^*(t_i)$ and yet $V_{t_i}(x_0)>0$, which is a contradiction.

We now come to the critical part of the proof.  Suppose that part of $\Sigma_\infty$ lies outside the sphere of radius $\rsch(M)$. Then for some $x_0\in\Sigma_\infty$ and some $r>0$, the ball $B_{2r}(x_0)$ lies completely outside the sphere of radius $\rsch(M)$.
The basic intuitive argument is as follows:  We know that $V_{t_i}$ is zero at $\Sigma^*(t_i)$, but in $B_r(x_0)$ we know that $V_\infty$ is significantly smaller than zero.  The only way this can happen is if the gradient of $V_{t_i}$ is blowing up.  In fact, we show that it blows up badly enough that the energy of $V_{t_i}$ blows up, which is a contradiction since we have a bound on energy (described below).

Consider the unique harmonic function $f$ that approaches $-1$ at infinity and is zero at the sphere $S_{\rmax}$.
Since $\Sigma^*(t_i)$ is contained in $S_{\rmax}$, we can deduce from the energy-minimizing property of harmonic functions that the energy of $V_{t_i}$ in the exterior of $\Sigma^*(t_i)$ is less than the energy of $f$ in the exterior of $S_{\rmax}$, namely $(n-2)\omega_{n-1}\rmax^{n-2}$.  Let $\Omega$ be the region outside $\Sigma^*(t_i)$, let $L_z=\{x\in B_r(x_0)\,|\, V_{t_i}(x)=z\}$, and let $dA_z$ be the area form of $L_z$. (Note that we suppress the dependence on $i$ in the notation.)  Then by the co-area formula and the H\"{o}lder inequality,
\begin{eqnarray*}
 (n-2)\omega_{n-1}\rmax^{n-2}
&\geq& \int_{\Omega} |\nabla V_{t_i}|^2\, dV\\
&\geq& \int_{\Omega\cap B_r(x_0)} |\nabla V_{t_i}|^2\, dV\\
&=&\int_{-1}^0 \left(\int_{L_z} |\nabla V_{t_i}|\,dA_z\right)\,dz\\
&\geq&\int_{-1}^0 |L_z|^2  \left(\int_{L_z} |\nabla V_{t_i}|^{-1}\,dA_z\right)^{-1}\,dz
\end{eqnarray*}
Let $\mu(z)=|\{x\in B_r(x_0)\,|\, V_{t_i}(x)>z\}|$.  Then $\mu'(z)=\int_{L_z} |\nabla V_{t_i}|^{-1}\,dA_z$ and we have
\begin{equation}\label{energy}
(n-2)\omega_{n-1}\rmax^{n-2}\geq \int_{-1}^0 |L_z|^2  \mu'(z) ^{-1}\,dz.
\end{equation}

On the other hand, we know that for some nonzero constant $c<0$, we have $V_\infty(x)<2c$ in $B_r(x_0)$.  Now let $\epsilon>0$, and choose $i$ large enough so that $V_{t_i}(x)<c<0$ for all $x\in B_r(x_0)$ lying outside $(\Sigma_\infty)_\epsilon$.  Therefore 
$$\{x\in B_r(x_0)\,|\, V_{t_i}(x)>c\}\subset (\Sigma_\infty)_\epsilon \cap B_r(x_0).$$
and it follows that
\begin{equation} \label{limitmu}
\lim_{i\to\infty} \mu(c) =0,
\end{equation} 
Since $\mu(c)=\int_c^0  \mu'(z)\,dz$, we can choose $i$ large enough so that $\mu'(z)<\sqrt{\mu(c)}$ on a set of measure at least $-c/2$.  We also know that for $0>z>c$, $L_z\subset (\Sigma_\infty)_\epsilon \cap B_r(x_0)$, and consequently these $L_z$'s are Hausdorff converging to $\Sigma_\infty\cap B_r(x_0)$.  In particular for $0>z>c$, $|L_z|$ is uniformly bounded below by some constant $\alpha$.  Plugging this into our energy bound (\ref{energy}), we see that
$$(n-2)\omega_{n-1}\rmax^{n-2}\geq \int_{-c}^0 |L_z|^2  \mu'(z) ^{-1}\,dz\geq (-c/2) \alpha^2 \mu(c)^{-1/2}$$
which contradicts equation (\ref{limitmu}).

\end{proof}
The main result follows easily from this lemma.  Let $\epsilon>0$.  Since $\Sigma^*(t)$ is outer minimizing with respect to $G_t$, we see that $A$ is less than or equal to the area of the sphere of radius $\rsch(M)+\epsilon$ with respect to $G_t$.  Also, the argument in the previous lemma shows that $U_t$ converges to $1+{M\over 2}|x|^{2-n}$ uniformly on $S_{\rsch(M)+\epsilon}$.  So for large enough $t$, we have
\begin{eqnarray*}
A&\leq& |S_{\rsch(M)+\epsilon}|_{G_t}\\
&=&\int_{S_{\rsch(M)+\epsilon}} U_t^{2(n-1)\over n-2}\,d\mu\\
&\leq&\int_{S_{\rsch(M)+\epsilon}} (1+{M\over 2}|x|^{2-n}+\epsilon)^{2(n-1)\over n-2}\,d\mu
\end{eqnarray*}
which converges to  $\omega_{n-1}(2M)^{n-1\over n-2}$ as $\epsilon\to 0$, proving our main theorem (Theorem \ref{main}) except for the case of equality.

Now consider the case of equality, $A=\omega_{n-1}(2m(0))^{n-1\over n-2}$.  Then $m(0)\leq M$, and thus $m(t)$ is constant. So we know that $\tilde{m}(t)=0$.  By the equality case of the Positive Mass Theorem, this means that $(\tilde{M}_t,\tilde{g}_t)$ is Euclidean space.\footnote{More precisely, because of the gluing we have to apply a refined version of the Positive Mass Theorem as in \cite{miao}.}   Therefore $(\bar{M}_t,\bar{g}_t)$ is globally conformal to Euclidean space and has nonnegative scalar curvature. This is only possible for the Schwarzchild metric.  Moreover, there is only one horizon in the Schwarzschild manifold, so that is where $\Sigma(t)$ must be.

The rest of the paper deals with the proofs that were skipped, namely Lemma \ref{Mnotzero} and Lemma \ref{rmax} (the primary technical lemma of this paper).

\section{Proof of Lemma \ref{Mnotzero}}

Suppose that $M=0$.  We want to argue that this is not possible.  We do this by following the same argument we gave in the $M\neq0$ case.  We can no longer choose $r_0<{1\over2}\rsch(M)$, but we can still choose some small $r_0>0$.  All of the arguments are valid up until we reach the proof of Lemma \ref{lastlemma}.  In the proof of this lemma, we argued that for some sequence of $t_i$'s, $\Sigma^*(t_i)\cap(\rr^n\smallsetminus B_{r_0})$ Hausdorff converges to some $\Sigma_\infty$.  A priori, $\Sigma_\infty$ could be empty.  However, the following lemma shows that, for a judicious choice of $r_0$ and $t_i$'s, $\Sigma_\infty$ will be nonempty.

\begin{lem}
Suppose that $M=0$.  Let $R$ be any constant such that $r_0<R<\left({A\over \omega_{n-1}}\right)^{1\over n-1}$.  There exists an unbounded sequence of times $t_i$ such that $\Sigma^*(t_i)$ always contains a point $x$ with $|x|>R$.
\end{lem}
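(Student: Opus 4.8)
The plan is to argue by contradiction. If the Lemma failed for some such $R$, the set of times $t$ for which $\Sigma^*(t)$ meets $\{|x|>R\}$ would be bounded, so there would be a $T$ with $\Sigma^*(t)\subset\bar B_R$ for all $t\geq T$ (write $\bar B_\rho$ for the closed coordinate ball of radius $\rho$, which makes sense since $r_0<R$). I would then derive a contradiction by showing that the $G_t$-area of $\Sigma^*(t)$ is eventually strictly less than $A$, whereas it is identically $A$. Fix $R'$ with $R<R'<(A/\omega_{n-1})^{1/(n-1)}$, so $\omega_{n-1}(R')^{n-1}<A$. Since $\Sigma^*(t)\subset\bar B_R$ and $\Sigma^*(t)$ is outer minimizing in $(M,G_t)$, the coordinate sphere $S_{R'}$ is an enclosing competitor, so
$$A=|\Sigma^*(t)|_{G_t}\leq|S_{R'}|_{G_t}=\int_{S_{R'}}U_t^{2(n-1)/(n-2)}\,d\mu .$$
It therefore suffices to prove that $U_t\to1$ uniformly on $S_{R'}$ as $t\to\infty$, for then the right-hand side tends to $\omega_{n-1}(R')^{n-1}<A$.

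To establish this convergence I would assemble three ingredients, all valid when $M=0$. (i) Lemma \ref{converge}, whose proof does not use $M\neq0$, gives $U_t\to1$, $V_t\to-1$, $W_t\to1$ uniformly on $\{|x|\geq2a\rmax\}$ (the limit constants collapsing because $M=0$). (ii) Under the hypothesis $\Sigma^*(t)\subset\bar B_R$, for $t$ large the metric $\tilde G_t$ equals $W_t^{4/(n-2)}\delta$ on all of $\rr^n\smallsetminus B_R$ with $W_t$ Euclidean harmonic and $\lim_{x\to\infty}W_t=1$, so Theorem \ref{strongPMT} may be applied with the radius $R$ in place of $\rmax$; since $\tilde m(t)\to0$ by Lemma \ref{mtildelimit}, this yields $W_t\to1$ uniformly on $\{|x|\geq aR\}$. (iii) Equation \eqref{diffyqW} writes $\frac{d}{dt}U_t$ as the radial-dilation transport operator plus a zeroth-order term and source $-2W_t$; integrating this ODE along the outgoing characteristics $\tau\mapsto|x|\,e^{2(\tau-t)/(n-2)}$, which escape in bounded time into the region $\{|x|\geq2a\rmax\}$ where $U_\tau\to1$ while remaining in the region where $W_\tau\to1$, propagates the convergence $U_t\to1$ inward. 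Together these give $U_t\to1$ (hence $G_t\to\delta$), and then $V_t=U_t-2W_t\to-1$, uniformly on $\{|x|\geq\rho_1\}$ with $\rho_1=\min(aR,2a\rmax)$; since $\rho_1\geq R$, this includes $S_{R'}$ whenever $R'\leq\rho_1$, which completes the area comparison in that case.

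Two further points must be dealt with. If instead $\Sigma^*(t)\subset\bar B_{r_0}$ for all large $t$, repeat the argument with $R$ replaced by the free parameter $r_0$: strong PMT with radius $r_0$ and the same propagation give $U_t\to1$ on $\{|x|\geq ar_0\}$, whence $A\leq|S_{2ar_0}|_{G_t}\to\omega_{n-1}(2ar_0)^{n-1}$, which is $<A$ provided $r_0$ was chosen small enough — a contradiction. Otherwise $\Sigma^*(t_i)$ meets $\{|x|>r_0\}$ along an unbounded sequence, and then (exactly as in the proof of Lemma \ref{lastlemma}, using the uniform bounds on $U_t$) we may pass to a subsequence along which $\Sigma^*(t_i)$ Hausdorff converges to a \emph{nonempty} $\Sigma_\infty\subset\bar B_R$. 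On the exterior region of $\Sigma_\infty$ the harmonic limit $V_\infty$ of $V_{t_i}$ equals $-1$ on the open set $\{|x|>\rho_1\}$, hence equals $-1$ identically by unique continuation; the co-area/Hölder energy estimate of Lemma \ref{lastlemma} — which needs only $\Sigma_\infty\neq\emptyset$ and $V_\infty<0$, both now in hand — then forces $\int|\nabla V_{t_i}|^2\to\infty$, contradicting the bound $(n-2)\omega_{n-1}\rmax^{n-2}$.

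I expect the delicate step to be ingredient (iii): implementing the transport (Duhamel) estimate for \eqref{diffyqW} so that $U_t\to1$ is genuinely propagated inward, \emph{uniformly}, from $\{|x|\geq2a\rmax\}$ down to $\{|x|\geq\rho_1\}$. The reason for the two-case bookkeeping above is the universal constant $a$ of Theorem \ref{strongPMT}: strong PMT controls $W_t$ only on $\{|x|>aR\}$ and the transport equation only carries convergence inward through the region where $W_t$ is already controlled, so a bare area comparison on $S_{R'}$ would prove the Lemma only for $R<a^{-1}(A/\omega_{n-1})^{1/(n-1)}$. The point of the $V_\infty\equiv-1$/energy-blow-up alternative is that it needs only $\rho_1$ to exceed the radius of $\Sigma_\infty$ (automatic), not $\rho_1<(A/\omega_{n-1})^{1/(n-1)}$; so whenever the crude comparison is unavailable the Hausdorff limit is nonempty and the energy argument takes over, recovering the Lemma for $R$ all the way up to the critical radius. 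The work lies in this organization and in the transport analysis, rather than in any single hard estimate.
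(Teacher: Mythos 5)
Your overall strategy (argue by contradiction, then compare $A=|\Sigma^*(t)|_{G_t}$ with $|S_{R'}|_{G_t}$ using outer minimization) is the paper's, but the machinery you assemble to get $U_t\to 1$ on $S_{R'}$ --- a second application of Theorem \ref{strongPMT} at radius $R$, a transport/Duhamel analysis of \eqref{diffyqW}, and a two-case fallback ending in the energy blow-up argument --- is aimed at a difficulty that is not actually there. Under your own contradiction hypothesis ($\Sigma^*(t)\subset \bar{B}_R$ for all $t\geq T$), the function $U_t$ is Euclidean harmonic on \emph{all} of $\rr^n\smallsetminus\overline{B_R}$ (this region lies outside $\Sigma^*(t)$ and has $|x|>r_0$), and it is uniformly bounded there (maximum principle plus Lemma \ref{rmax}, exactly the bounds used in Lemma \ref{lastlemma}, none of which needs $M\neq 0$). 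Hence some subsequence converges uniformly on compact subsets of $\rr^n\smallsetminus\overline{B_R}$ to a harmonic function; by Lemma \ref{converge} with $M=0$ that limit equals $1$ on $\{|x|\geq 2a\rmax\}$, and by real-analyticity (unique continuation) of harmonic functions it is identically $1$ on the whole exterior region, in particular uniformly on $S_{R'}$ for any $R'\in\bigl(R,(A/\omega_{n-1})^{1/(n-1)}\bigr)$. That is the paper's entire proof: the ``inward propagation'' you single out as the delicate step comes for free, because the contradiction hypothesis itself extends the harmonicity of $U_t$ down to radius $R$; no transport estimate, no second use of the strong PMT, and no restriction of the form $R<a^{-1}(A/\omega_{n-1})^{1/(n-1)}$ ever appears.

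Beyond missing this, your proposal as written has concrete gaps. The transport step (iii) is only sketched, and you flag it yourself; also the inequality should read $R'\geq\rho_1$ (you need $S_{R'}$ to lie inside the region where convergence is known), not $R'\leq\rho_1$. Fallback (a) requires $2ar_0<(A/\omega_{n-1})^{1/(n-1)}$, an extra smallness condition on $r_0$ tied to the universal constant $a$ that is not in the statement of the lemma (tolerable in the context of Section 4, where $r_0$ is at our disposal, but it must be said). Most seriously, in fallback (b) you only know that $\Sigma^*(t_i)$ meets $\{|x|>r_0\}$, so the Hausdorff limit $\Sigma_\infty$ could consist entirely of points on, or accumulating at, the coordinate sphere $S_{r_0}$; then there is no $x_0\in\Sigma_\infty$ and $r>0$ with $B_{2r}(x_0)\subset\{|x|>r_0\}$, which the energy argument of Lemma \ref{lastlemma} genuinely needs (the coordinates are Euclidean and $V_t$ harmonic only for $|x|>r_0$). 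Producing a limit point at a definite distance beyond $r_0$ is precisely the role of the present lemma in the paper's organization, so this fallback is partially circular unless you insert an intermediate-radius dichotomy. None of this is unfixable, but the repaired proof would remain much longer than the two-line harmonic-compactness-plus-unique-continuation argument you overlooked.
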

A more general version of this fact is given in \cite[Section 9]{bray}.
\begin{proof}
Suppose, to the contrary, that there exists some $\tilde{t}>t_0$ such that for all $t>\tilde{t}$, $\Sigma^*(t)$ is contained in $S_R$, the sphere of radius $R$.  Choose $R'$ between $R$ and $\left({A\over \omega_{n-1}}\right)^{1\over n-1}$.
Since the $U_t$'s are harmonic and uniformly bounded, we know that some subsequence converges uniformly on compact subsets  of $\rr^n\smallsetminus \overline{B_R}$, and by Lemma \ref{converge} and the fact that $M=0$, we know that the limit function is $1$.
In particular, we see that $\lim_{t\to\infty} U_t(x)=1$ uniformly on $S_{R'}$.  Therefore, as $t\to\infty$, 
$|S_{R'}|_{G_t}=\int_{S_{R'}} U_t(x)^{2(n-1)\over n-2}\,d\mu\to\omega_{n-1}(C')^{n-1}<A$.  This contradicts the fact that
$|S_{R'}|_{G_t}\geq A$.
\end{proof}
So as long as we take $r_0<{1\over 2}\left({A\over \omega_{n-1}}\right)^{1\over n-1}$ and restrict our attention to times in the sequence described by the lemma, we know that $\Sigma_\infty$ is nonempty. Furthermore, for some $x_0\in\Sigma_\infty$ and some $r>0$, $B_{2r}(x_0)$ lies completely outside the sphere of radius $r_0$.  The energy argument given in Lemma \ref{lastlemma} now gives us a contradiction.

\section{Proof of Lemma \ref{rmax}}\label{rmaxproof}

Here we introduce a technical tool that will allow us to locally control the area of $\Sigma(t)$.  We 
will describe this tool in the language of integral currents, but this is not actually necessary for the 
application in this paper.

\begin{defin} 
For any $\gamma\geq 1$, an integral current $S$ in $\rr^n$ is said to be \emph{$\gamma$-almost area-minimizing} if, for any ball $B$ with $B\cap\spt \partial S=\emptyset$ and any integral current $T$ with $\partial T=\partial(S\lfloor B)$, $|S\lfloor B|\leq\gamma |T|$, where the absolute value signs denote the area.\footnote{In geometric measure theory, the correct term to use here is ``mass'' rather than ``area.''  We avoid the term ``mass'' here for obvious reasons.}
\end{defin}
Note that a $1$-almost area-minimizer is an area minimizer.  It is well-known that if $S$ is a $m$-dimensional minimal submanifold of $\rr^n$, then for any $x\in S$ and $0<r<d(x,\partial S)$, 
$$|S\cap B_r(x)|\geq {\alpha_m}r^m$$
where $\alpha_m=\omega_{m-1}/m$ is the volume of the unit ball in $\rr^m$.  This lower bound on area is a consequence of monotonicity \cite{allard}.  The following result is surely well-known to experts, but we include its proof for the sake of completeness.

\begin{lem}\label{gamma}
Let $\gamma\geq 1$, and let $S$ be a $m$-dimensional $\gamma$-almost area-minimizing integral current in 
$\rr^n$.  Let $x\in \spt S$, and let 
 $0<r<d(x,\spt\partial S)$.  Then
$$|S\lfloor B_r(x)|\geq \gamma^{1-m}\alpha_m r^m.$$
\end{lem}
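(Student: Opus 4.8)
The plan is to run a standard monotonicity-type argument, but tracking the effect of the $\gamma$-almost-minimizing hypothesis on the usual differential inequality. First I would set $f(r) = |S\lfloor B_r(x)|$, which is finite and monotone nondecreasing in $r$, hence differentiable almost everywhere, with $f'(r) \geq |S\lfloor \partial B_r(x)|$ (the slice mass) for a.e.\ $r$, by the coarea/slicing theory for integral currents. The goal is to derive a differential inequality of the form $\frac{f'(r)}{f(r)} \geq \frac{m}{\gamma r}$, which upon integration from a small radius up to $r$ and using the Euclidean density lower bound $\lim_{\rho\to 0}\frac{f(\rho)}{\alpha_m \rho^m}\geq 1$ at $x\in\spt S$ yields exactly $f(r)\geq \gamma^{1-m}\alpha_m r^m$.

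The key step is the isoperimetric/comparison inequality inside a single ball. For a.e.\ $r<d(x,\spt\partial S)$, the slice $\langle S, |\cdot - x|, r\rangle$ is an integral current of dimension $m-1$ with $\spt$ in $\partial B_r(x)$, and it bounds two competitors: the ``inner'' piece $S\lfloor B_r(x)$ itself, and the ``cone'' $C_r = x \times\!\!\!\times \langle S, |\cdot-x|, r\rangle$ obtained by coning the slice to the center $x$, which satisfies $\partial C_r = \langle S,|\cdot-x|,r\rangle$ and the elementary mass bound $|C_r| \leq \frac{r}{m}|\langle S,|\cdot-x|,r\rangle| \leq \frac{r}{m} f'(r)$. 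Since $B_r(x)$ is a ball disjoint from $\spt\partial S$ and $\partial C_r = \partial(S\lfloor B_r(x))$, the $\gamma$-almost-minimizing property gives $f(r) = |S\lfloor B_r(x)| \leq \gamma |C_r| \leq \frac{\gamma r}{m} f'(r)$, which is the desired differential inequality. Rearranging, $(\log f(r))' \geq \frac{m}{\gamma r}$ a.e., so $r\mapsto f(r) r^{-m/\gamma}$ is nondecreasing; comparing $r$ to $\rho\to 0^+$ and using $f(\rho)\geq (1-o(1))\alpha_m \rho^m$ gives $f(r)\,r^{-m/\gamma} \geq \liminf_{\rho\to 0} \alpha_m \rho^{m - m/\gamma}$. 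Since $\gamma\geq 1$ the exponent $m - m/\gamma \geq 0$; when $\gamma>1$ this limit is $0$ and the naive comparison is vacuous, so one must instead integrate the inequality $f'(r)\geq \frac{m}{\gamma r} f(r)$ together with the \emph{equality} $f(\rho) = (1+o(1))\alpha_m\rho^m$ at the bottom end, i.e.\ run the monotonicity from a genuinely infinitesimal scale; carrying this out carefully yields $f(r) \geq \alpha_m r^m \cdot (\text{const})$ with the constant computed to be $\gamma^{1-m}$ after optimizing how the $\gamma$-loss compounds across dyadic scales.

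The cleanest way to get the sharp constant $\gamma^{1-m}$, and the step I expect to be the main obstacle, is to avoid the single-shot integration and instead iterate across dyadic annuli. On each annulus $B_{r}\setminus B_{r/2}$ the almost-minimizing hypothesis, applied to the ball of that intermediate radius, loses a factor of at most $\gamma$ relative to the exact monotonicity scaling; there are effectively $m$ ``directions'' of rescaling contributing to the power $r^m$, and tracking which of them incur the $\gamma$-factor shows the total loss is $\gamma^{m-1}$ rather than $\gamma^m$ — one factor is saved because the comparison is between the ball and a cone over its own boundary slice, not between two independent scales. Concretely I would prove the clean statement that $r\mapsto f(r)\, r^{-m/\gamma}$ is monotone, deduce $f(r) \geq \alpha_m r^m \cdot r^{-m(1 - 1/\gamma)} \cdot \liminf_{\rho\to 0}[\rho^{m(1-1/\gamma)}]$... which as noted degenerates, so the honest route is: fix $r$, and for the exact-minimizing scaling one would write $f(r) = \alpha_m r^m$; the $\gamma$-defect enters only through $f(r)\leq \frac{\gamma r}{m}f'(r)$ which integrates to $f(r)\geq f(r_0)(r/r_0)^{m/\gamma}$, and then letting $r_0\to 0$ with $f(r_0)\sim\alpha_m r_0^m$ forces $r_0^{m-m/\gamma}\to$ handled by choosing $r_0$ comparable to $r$ and iterating — this bookkeeping, ensuring the exponent and the constant land exactly on $\gamma^{1-m}\alpha_m r^m$, is the only delicate point; everything else (slicing, coning, the density bound from Allard's monotonicity) is standard geometric measure theory that I would cite from \cite{allard}.
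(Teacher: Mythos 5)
Your approach diverges from the paper's at the key step, and the divergence is where the gap lies. The paper does not use the cone comparison at all: it lets $F(r)=|S\lfloor B_r(x)|$, notes $F'(r)\geq|\partial(S\lfloor B_r(x))|$ by slicing, and then bounds the slice from below by the \emph{sharp isoperimetric inequality}, $|\partial(S\lfloor B_r(x))|\geq m\alpha_m^{1/m}G(r)^{(m-1)/m}$, where $G(r)$ is the least area filling $\partial(S\lfloor B_r(x))$; the $\gamma$-almost-minimizing property gives $G(r)\geq F(r)/\gamma$, so $\frac{d}{dr}\bigl(F(r)^{1/m}\bigr)\geq\alpha_m^{1/m}\gamma^{(1-m)/m}$, which integrates from $0$ to $r$ to give exactly $\gamma^{1-m}\alpha_m r^m$ with no input about the density of $S$ at $x$. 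Your cone-comparison inequality $f(r)\leq\frac{\gamma r}{m}f'(r)$ is genuinely weaker: it only yields $f(r)\geq f(r_0)(r/r_0)^{m/\gamma}$, and as you yourself observe this degenerates as $r_0\to 0$ when $\gamma>1$. The deferred ``dyadic bookkeeping'' cannot rescue it: the function $f(r)=\alpha_m r^m$ for $r\leq r_1$ and $f(r)=\alpha_m r_1^{m-m/\gamma}r^{m/\gamma}$ for $r\geq r_1$ satisfies your differential inequality and has density exactly $1$ at the center, yet $f(r)/(\alpha_m r^m)=(r_1/r)^{m-m/\gamma}$ can be made arbitrarily small; so no argument using only your two ingredients (the ODE and the small-scale density) can produce the bound $\gamma^{1-m}\alpha_m r^m$, or indeed any bound of the form $c\,r^m$. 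The heuristic about ``one $\gamma$-factor being saved'' across dyadic scales is not a substitute for the missing inequality; what is actually needed is the isoperimetric step, which converts the almost-minimizing hypothesis into a lower bound on the \emph{boundary} mass rather than an upper bound on a filling.

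A secondary problem: your plan invokes a density lower bound $\liminf_{\rho\to 0}f(\rho)/(\alpha_m\rho^m)\geq 1$ at an arbitrary point $x\in\spt S$, citing Allard monotonicity. For a general integral current this is false at arbitrary support points (small pieces can accumulate at a point of zero density), and Allard's monotonicity applies to stationary varifolds; establishing a density bound at every support point of an almost-minimizer is essentially the content of the lemma, so using it as an input is circular. The paper's route needs nothing at the bottom scale beyond $F\geq 0$, which is one reason it is the right argument here.
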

\begin{proof}
Let $F(r)=|S\lfloor B_r(x)|$.  Since $F$ is monotonically increasing, $F'(r)$ exists for almost all $r$, 
and by the slicing theorem, 
$$F'(r)\geq |\partial(S\lfloor B_r(x))|.$$  
Let $G(r)$ be the infimum of all areas bounding 
$\partial(S\lfloor B_r(x))$. By the sharp\footnote{Using a non-sharp constant in the isoperimetric 
inequality would simply have the effect of attaining a worse constant in the statement of our proposition.} isoperimetric inequality \cite{almgren}, 
$$|\partial(S\lfloor B_r(x))|\geq m\alpha_m^{1/m} G(r)^{{m-1\over m}}.$$
Finally, by assumption we know that
$$G(r)\geq {1\over \gamma}F(r).$$
Putting the last three inequalities together, we find that
$$F'(r)\geq m\alpha_m^{1/m} \left({F(r)\over\gamma}\right)^{{m-1\over m}}.$$
Thus
$${d\over dr}\left(F(r)^{1\over m}\right)\geq \alpha_m^{1/m}\gamma^{{1-m\over m}}.$$
The result now follows from integrating this inequality.

\end{proof}

An equivalent formulation of Lemma \ref{rmax} is the following.
\begin{lem}\label{bounded}
There exists some $\rmax>0$ such that $\Sigma(t)$ is always enclosed by the coordinate sphere of radius $\rmax e^{2t\over n-2}$.
\end{lem}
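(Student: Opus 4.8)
The plan is to prove the equivalent statement, Lemma~\ref{bounded}: that $\Sigma(t)$ is enclosed by the coordinate sphere of radius $\rmax e^{2t/(n-2)}$ for some $t$-independent $\rmax$. This is precisely the step where \cite{bray} used curvature estimates via Gauss--Bonnet, and the replacement is the density lower bound of Lemma~\ref{gamma}. Only two facts about the flow are needed: $(a)$ $\Sigma(t)$ is the outermost horizon enclosing $\Sigma(0)$ in $(M,g_t)$, where $g_t=\UU_t^{4/(n-2)}\gflat$ and $\gflat$ is flat outside $B_{\rharm}$; and $(b)$ the $g_t$-area of $\Sigma(t)$ is the constant $A$. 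Let $x_0$ be an outermost point of $\Sigma(t)$, $|x_0|=\rho:=\operatorname{dist}(0,\Sigma(t))$. On one hand, $(b)$ together with the pointwise bound $\UU_t\ge c_0e^{-t}$ proved below shows that the flat area of $\Sigma(t)\cap\{|x|>\rharm\}$ is at most $Ce^{2t(n-1)/(n-2)}$ for a fixed $C$. On the other hand, once $\Sigma(t)$ is known to be $\gamma$-almost area-minimizing in the flat metric on the concentric subballs of $B_{\rho/2}(x_0)$ for a fixed $\gamma$, Lemma~\ref{gamma} gives $|\Sigma(t)\cap B_{\rho/2}(x_0)|_{\mathrm{flat}}\ge\gamma^{2-n}\alpha_{n-1}(\rho/2)^{n-1}$. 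Comparing these two bounds forces $\rho\le \rmax e^{2t/(n-2)}$, which is Lemma~\ref{bounded}.

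Everything therefore reduces to controlling $\UU_t$ near $\Sigma(t)$, and this is the crux. From ${d\over dt}\UU_t=\VV_t=\nu_t\UU_t$ we get ${d\over dt}\log\UU_t=\nu_t$, and since $\nu_t$ is $g_t$-harmonic, vanishes on $\Sigma(t)$, and tends to $-1$, the maximum principle gives $-1\le\nu_t\le0$; hence $\UU_0e^{-t}\le\UU_t\le\UU_0$ and in particular $\UU_t\ge c_0e^{-t}$ with $c_0=\inf_M\UU_0>0$ --- the lower bound used in $(b)$. The delicate point is the matching upper bound $\UU_t\le C_1e^{-t}$ on $B_{\rho/2}(x_0)$. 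Every $y\in B_{\rho/2}(x_0)$ has $|y|\ge\rho/2$ and lies outside $\Sigma(s)$ for all $s<t$, so $\UU_t(y)=\UU_0(y)\,e^{-t}\exp\!\big(\int_0^t(1+\nu_s(y))\,ds\big)$ and one wants $\int_0^t(1+\nu_s(y))\,ds=O(1)$. Since $1+\nu_s$ is the Newtonian capacitary potential of the region enclosed by $\Sigma(s)$, we have $1+\nu_s(y)\lesssim(\operatorname{rad}\Sigma(s))^{n-2}|y|^{2-n}$, whence $\int_0^t(1+\nu_s(y))\,ds\lesssim\rho^{2-n}\int_0^t(\operatorname{rad}\Sigma(s))^{n-2}\,ds$. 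With the a priori bound $\operatorname{rad}\Sigma(s)\le\rmax e^{2s/(n-2)}$ this integral is $O(1)$ exactly when $\rho\gtrsim e^{2t/(n-2)}$, so the estimate is self-referential and the lemma is run as a continuity/bootstrap argument in $t$ (using that $\operatorname{rad}\Sigma(t)$ is nondecreasing and right-continuous): assuming $\operatorname{rad}\Sigma(s)\le\rmax e^{2s/(n-2)}$ for $s<t$ and, for contradiction, $\operatorname{rad}\Sigma(t)>2\rmax e^{2t/(n-2)}$, one obtains $\UU_t\le C_1e^{-t}$ on $B_{\rho/2}(x_0)$ with a \emph{universal} constant $C_1$.

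Granting this, $\sup\UU_t/\inf\UU_t$ on $B_{\rho/2}(x_0)$ and on each of its concentric subballs is at most $(C_1/c_0)^{2(n-1)/(n-2)}$; since flat and $g_t$-areas differ by the factor $\UU_t^{2(n-1)/(n-2)}$ in this (flat) region, and $\Sigma(t)$, being the outermost horizon enclosing $\Sigma(0)$, is a genuine $g_t$-area-minimizing boundary inside any ball disjoint from the fixed compact region enclosed by $\Sigma(0)$, it follows that $\Sigma(t)$ is $\gamma$-almost area-minimizing in the flat metric on the subballs of $B_{\rho/2}(x_0)$ with $\gamma=(C_1/c_0)^{2(n-1)/(n-2)}$. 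The comparison of the first paragraph then yields $\operatorname{rad}\Sigma(t)\le\rmax e^{2t/(n-2)}$ for a universal $\rmax$, contradicting the bootstrap hypothesis and establishing Lemma~\ref{bounded}, hence Lemma~\ref{rmax}; at a jump time one runs the identical argument with $\Sigma^\pm(t)$ in place of $\Sigma(t)$. The only genuinely subtle ingredient is the conformal-factor estimate of the middle paragraph; the capacitary bound on $\nu_s$, the reduction of the outermost-horizon property to a usable $\gamma$-almost-minimizing property, and the bookkeeping of constants in the bootstrap are all routine.
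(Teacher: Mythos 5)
Your overall architecture is the same as the paper's: a bootstrap in $t$ with threshold radius $\rmax e^{2t/(n-2)}$, two-sided bounds $c_0e^{-t}\le \UU_t\le C_1e^{-t}$ near the putative far-out piece of $\Sigma(t)$, the resulting $\gamma$-almost-minimizing property in the flat metric, Lemma \ref{gamma} to get a flat-area lower bound of order $\rho^{n-1}$, and comparison with the constant $g_t$-area $A$. The genuine gap is in the step you call routine: the ``capacitary'' bound $1+\nu_s(y)\lesssim(\operatorname{rad}\Sigma(s))^{n-2}|y|^{2-n}$. The function $1+\nu_s$ is \emph{not} a Newtonian capacitary potential: $\nu_s$ is harmonic with respect to $g_s$, not the Euclidean metric, and $g_s$ is degenerating at exactly the rate at issue. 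To convert to a Euclidean-harmonic function you must pass to $(1+\nu_s)\UU_s=\UU_s+\VV_s$, whose boundary values on $\Sigma(s)$ equal $\UU_s$ there; a priori you only know $\UU_s\le\UU_0\le C$ on $\Sigma(s)$ (points of $\Sigma(s)$ can sit at small radius, where no $e^{-s}$ decay of $\UU_s$ is yet known), so the maximum-principle comparison gives $1+\nu_s(y)\le C e^{s}(\operatorname{rad}\Sigma(s))^{n-2}|y|^{2-n}$ after dividing by $\UU_s(y)\ge c_0e^{-s}$. With that extra $e^{s}$ your integral becomes $\rho^{2-n}\rmax^{n-2}\int_0^te^{3s}\,ds\sim e^{t}$ rather than $O(1)$, and the claimed upper bound $\UU_t\le C_1e^{-t}$ on $B_{\rho/2}(x_0)$ does not follow. (There is also a small slip earlier: $\rho$ should be the outer radius $\max_{\Sigma(t)}|x|$, not $\operatorname{dist}(0,\Sigma(t))$.)

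The repair is precisely what the paper does: do not pass through $\nu_s$ at all. The function $v_s$ is $g_0$-harmonic outside $\Sigma(s)$ with boundary value $0$ on $\Sigma(s)$, so under the containment hypothesis the maximum principle in the region $\{|x|>R(s)\}$ gives directly $\VV_s(x)\le e^{-s}\bigl[\bigl(R(s)/|x|\bigr)^{n-2}-1\bigr]$ there (and $\VV_s\le 0$ everywhere), with no knowledge of $\UU_s$ on $\Sigma(s)$ required; integrating ${d\over dt}\UU_t=\VV_t$ (rather than ${d\over dt}\log\UU_t=\nu_t$) then yields $\UU_t\le(2+C)e^{-t}$ outside radius comparable to $R(t)$. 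Alternatively you could keep your formulation by strengthening the bootstrap hypothesis to include the estimate $\UU_s\le Ce^{-s}$ on $S_{R(s)}$ and running the capacitary comparison only in $\{|x|>R(s)\}$, but that is exactly the paper's estimate in disguise. With this step replaced, the rest of your argument (density bound from Lemma \ref{gamma}, area comparison, bookkeeping of the bootstrap, jump times) matches the paper's proof.
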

\begin{proof}
First, choose $\rmax$ large enough so that $\Sigma(1)$ is contained in the coordinate sphere of radius $\rmax$.  Next, we will choose $\rmax$ large enough so that the following claim holds.
\begin{claim}
Let $R(t)=\rmax e^{2t\over n-2}$.  Choose any $T>0$.  Suppose that $\Sigma(t)$ is contained in the sphere of radius $R(t)$ for all $t\in[0,T]$.  Then $\Sigma(T+1)$ is contained in the sphere of radius $R(T+1)$.
\end{claim}
Clearly, if we can choose $\rmax$ large enough so that the claim is true, then we will have proved the lemma.  We know that for some constant $C$,
${1\over C}<\UU_0(x)<1+C|x|^{2-n}$ for all $|x|>\rharm$.  In all of the computations that follow, assume that $|x|>\rharm$.  That is, we are only interested in the ``harmonic part'' of the end.  Let $s<T$.  Since $\Sigma(s)$ is enclosed by the sphere of $R(s)$, the maximum principle tells us that for all $|x|>R(s)$, $v_s(x)$ is smaller than the unique $g_0$-harmonic function that is $0$ at the sphere of radius $R(s)$ and approaches $-e^{-s}$ at infinity.  Explicitly, 
$$v_s(x)\leq {1\over \UU_0(x)}e^{-s} \left(  \left({R(s)\over |x|}\right)^{n-2}-1\right).$$
Consequently,
\begin{eqnarray*}
\VV_s(x)&\leq& e^{-s} \left(  \left({R(s)\over |x|}\right)^{n-2}-1\right)\text{ for }|x|\geq R(s),\text{ while}\\
\VV_s(x)&\leq& 0\text{ for }|x|< R(s)
\end{eqnarray*}
Therefore, for $|x|\geq R(T)$
\begin{eqnarray*}
\UU_{T+1}(x)&\leq& 1+C|x|^{2-n} + \int_0^{T}  e^{-s} \left(  \left({R(s)\over |x|}\right)^{n-2}-1\right)\,ds\\
&=& 1 + C|x|^{2-n} + \left[e^{s} \left({\rmax\over |x|}\right)^{n-2} +e^{-s}\right]^{T}_0\\
&=&1 + C|x|^{2-n}+ \left[(e^{T}-1) \left({\rmax\over |x|}\right)^{n-2} +e^{-T}-1 \right]\\
&\leq& CR(T)^{2-n}+  \left[e^{T} \left({\rmax\over R(T)}\right)^{n-2} +e^{-T} \right]\\
&=& C e^{-2T}+2e^{-T}\\
&\leq& (2+C)e^{-T}
\end{eqnarray*}
On the other hand, since $v_s(x)\geq -e^{-s}$, we know that 
$$u_{T+1}(x)\geq 1+ \int_0^{T+1}-e^{-s}\,ds=e^{-(T+1)}.$$
Therefore $$\UU_{T+1}(x)\geq {1\over C}e^{-(T+1)}.$$

Now suppose that $\Sigma(T+1)$ contains a point $p$ with $|x|>R(T+1)$.  We will show that we can choose $\rmax$ so that the area of $|\Sigma(T+1)|_{g_{T+1}}$ is bigger than $A$, which is a contradiction.  Consider the coordinate ball $B$ of radius ${1\over 3}R(T)$ around $p$. For $n<8$, this ball $B$ lies outside the sphere of radius $R(T)$.  Using the bounds above, we see that for $x\in B$,
$$e^{-(T+1)}\leq \UU_{T+1}(x)\leq (2+C)e^{-T}.$$
From this we can conclude, straight from the definitions, that $\Sigma(T+1)$ is $\gamma$-almost area minimizing in $B$ with respect to the Euclidean metric, where $$\gamma= (2e+Ce)^{2(n-1)\over n-2}.$$
So by Lemma \ref{gamma}, $\Sigma(T+1)$ has Euclidean area greater than $\alpha_{n-1}\gamma^{2-n}(R(T)/3)^{n-1}$.  Therefore 
\begin{eqnarray*}
|\Sigma(T+1)|_{g_{T+1}}&\geq&(e^{-(T+1)})^{2(n-1)\over n-2} \alpha_{n-1}\gamma^{2-n}\left({R(T)\over 3}\right)^{n-1}\\
&=& e^{-{2(n-1)\over n-2}} \alpha_{n-1}\gamma^{2-n}\left({\rmax\over 3}\right)^{n-1},
\end{eqnarray*}
which is just some constant times $\rmax^{n-1}$.  We just need to chose $\rmax$ large enough so that this number is larger than $A$.
\end{proof}

\nocite{morgan}

\bibliographystyle{alpha}
\bibliography{research07}

\begin{thebibliography}{ADM61}

\bibitem[ADM61]{ADM}
R.~Arnowitt, S.~Deser, and C.~W. Misner.
\newblock Coordinate invariance and energy expressions in general relativity.
\newblock {\em Phys. Rev. (2)}, 122:997--1006, 1961.

\bibitem[All72]{allard}
William~K. Allard.
\newblock On the first variation of a varifold.
\newblock {\em Ann. of Math. (2)}, 95:417--491, 1972.

\bibitem[Alm86]{almgren}
F.~Almgren.
\newblock Optimal isoperimetric inequalities.
\newblock {\em Indiana Univ. Math. J.}, 35(3):451--547, 1986.

\bibitem[Bar86]{bartnik}
Robert Bartnik.
\newblock The mass of an asymptotically flat manifold.
\newblock {\em Comm. Pure Appl. Math.}, 39(5):661--693, 1986.

\bibitem[BI02]{brayiga}
Hubert~L. Bray and Kevin Iga.
\newblock Superharmonic functions in {$\bold R\sp n$} and the {P}enrose
  inequality in general relativity.
\newblock {\em Comm. Anal. Geom.}, 10(5):999--1016, 2002.

\bibitem[Bra01]{bray}
Hubert~L. Bray.
\newblock Proof of the {R}iemannian {P}enrose inequality using the positive
  mass theorem.
\newblock {\em J. Differential Geom.}, 59(2):177--267, 2001.

\bibitem[HI01]{huiilm}
Gerhard Huisken and Tom Ilmanen.
\newblock The inverse mean curvature flow and the {R}iemannian {P}enrose
  inequality.
\newblock {\em J. Differential Geom.}, 59(3):353--437, 2001.

\bibitem[Lee]{strongPMT}
Dan~A. Lee.
\newblock On the near-equality case of the {P}ositive {M}ass {T}heorem.
\newblock preprint.

\bibitem[Mia02]{miao}
Pengzi Miao.
\newblock Positive mass theorem on manifolds admitting corners along a
  hypersurface.
\newblock {\em Adv. Theor. Math. Phys.}, 6(6):1163--1182 (2003), 2002.

\bibitem[Mor95]{morgan}
Frank Morgan.
\newblock {\em Geometric measure theory}.
\newblock Academic Press Inc., San Diego, CA, second edition, 1995.
\newblock A beginner's guide.

\bibitem[Sch89]{montecatini}
Richard~M. Schoen.
\newblock Variational theory for the total scalar curvature functional for
  {R}iemannian metrics and related topics.
\newblock In {\em Topics in calculus of variations (Montecatini Terme, 1987)},
  volume 1365 of {\em Lecture Notes in Math.}, pages 120--154. Springer,
  Berlin, 1989.

\bibitem[SY79]{schyau}
Richard Schoen and Shing~Tung Yau.
\newblock On the proof of the positive mass conjecture in general relativity.
\newblock {\em Comm. Math. Phys.}, 65(1):45--76, 1979.

\bibitem[Wit81]{witten}
Edward Witten.
\newblock A new proof of the positive energy theorem.
\newblock {\em Comm. Math. Phys.}, 80(3):381--402, 1981.

\end{thebibliography}

\end{document}